\documentclass{amsart}
\usepackage[margin=1in]{geometry}
\usepackage[colorlinks=true,citecolor=black,linkcolor=black,urlcolor=blue]{hyperref}
\usepackage{amsmath,pifont,enumitem,mathtools,shuffle}
\usepackage{tikz,pgf,hyperref}
\usepackage{physics}
\usepackage{mathdots}
\usepackage{yhmath}
\usepackage{siunitx}
\usepackage{array}
\usepackage{multirow}
\usepackage{diagbox}
\usepackage{standalone}
\usepackage[all]{xy}
\usepackage{colortbl}
\usepackage{caption}
\usepackage{thm-restate}
\usepackage{thmtools} 
\usepackage{float}

\usepackage{gensymb}
\usetikzlibrary{fadings}
\usetikzlibrary{patterns}
\usetikzlibrary{shadows.blur}
\usetikzlibrary{shapes}

\usepackage{amsmath}
\usepackage{amsthm}
\usepackage{amssymb}
\usepackage{mathtools}
\usepackage{soul}
\usepackage{color}

\newtheorem{theorem}{Theorem}[section]
\newtheorem{conjecture}[theorem]{Conjecture}
\newtheorem{proposition}[theorem]{Proposition}
\newtheorem{corollary}[theorem]{Corollary}
\newtheorem{lemma}[theorem]{Lemma}
\theoremstyle{definition}
\newtheorem{definition}[theorem]{Definition}
\newtheorem{example}[theorem]{Example}
\newtheorem{remark}[theorem]{Remark}
\newtheorem{question}[theorem]{Question}
\newtheorem{problem}[theorem]{Problem}

\newcommand{\out}[2]{\mathcal{O}_{#2}^{#1}}
\newcommand{\PF}{\mathrm{PF}}
\newcommand{\lucky}{\mathrm{lucky}}
\newcommand{\lace}{\mathrm{lace}}

\newcommand{\MPF}{\mathrm{MPF}}
\newcommand{\mpf}{\mathrm{mpf}}

\newcommand{\seqnum}[1]{\href{http://oeis.org/#1}{\underline{#1}}}

\usepackage[colorinlistoftodos]{todonotes}

\usepackage{amsmath}
\title{Metered Parking Functions}
\author[S.~Daugherty]{Spencer Daugherty}
\author[P.~E. Harris]{Pamela E. Harris}
\author[I.~Klein]{Ian Klein}
\author[M.~McClinton]{Matt ~McClinton}
\address[S. Daugherty, I. Klein]{Department of Mathematics, North Carolina State University, Raleigh, NC 27695}
\email{\textcolor{blue}{\href{mailto:spencer.page.daugherty@gmail.com}{sdaughe@ncsu.edu}}, \textcolor{blue}{\href{mailto:iklein@ncsu.edu}{iklein@ncsu.edu}}}
\address[P.~E. Harris, M. McClinton]{Department of Mathematical Sciences, University of Wisconsin-Milwaukee, Milwaukee, WI 53211}

\email{\textcolor{blue}{\href{mailto:peharris@uwm.edu}{peharris@uwm.edu}}, \textcolor{blue}{\href{mailto:mcclin33@uwm.edu}{mcclin33@uwm.edu}}}

\keywords{Parking function, metered parking function, continued fraction, parking function shuffle, Lucas sequence, Chebyshev polynomial}
\subjclass[2020]{05A05; 	05A10; 05A15}
\usepackage[backend=bibtex,maxbibnames=15, isbn=false, doi=false, url=false, giveninits=true]{biblatex}
\addbibresource{references.bib}

\begin{document}

\begin{abstract} We introduce a generalization of parking functions called $t$-metered $(m,n)$-parking functions, in which one of $m$ cars parks among $n$ spots per hour then leaves after $t$ hours. We characterize and enumerate these sequences for $t=1$, $t=m-2$, and $t=n-1$, and provide data for other cases. 
We characterize the $1$-metered parking functions by decomposing them into sections based on which cars are unlucky, and enumerate them using a Lucas sequence recursion. Additionally, we establish a new combinatorial interpretation of the numerator of the continued fraction $n-1/(n-1/\cdots)$ ($n$ times) as the number of $1$-metered $(n,n)$-parking functions.
We introduce the $(m,n)$-parking function shuffle in order to count $(m-2)$-metered $(m,n)$-parking functions, which also yields an expression for the number of $(m,n)$-parking functions with any given first entry. As a special case, we find that the number of $(m-2)$-metered $(m, m-1)$-parking functions is equal to the sum of the first entries of classical parking function of length $m-1$. 
We enumerate the $(n-1)$-metered $(m,n)$-parking functions in terms of the number of classical parking functions of length $n$ with certain parking outcomes, which we show are periodic sequences with period $n$. 
We conclude with an array of open problems.

 \end{abstract}
\maketitle
\section{Introduction}

Throughout, let $n\in\mathbb{N}\coloneqq\{1,2,3,\ldots\}$,
and $[n]\coloneqq
\{1,2,\ldots,n\}$.
Given $m,n\in\mathbb{N}$ with $m\leq n$, consider the following parking scenario. 
There are 
$m$ cars in line to enter a one-way street consisting of $n$ parking spots. For each $i\in[m]$, where car $i$ prefers parking spot $a_i\in[m]$, a  complete list of parking preferences $\alpha=(a_1,a_2,\ldots,a_n)\in[n]^m$ is called a \textit{preference list}.
The cars enter the street in sequential order from 1 to $m$, and park according to the following \emph{parking scheme}: car $i$ drives to their preferred spot $a_i$, parking there if the parking spot $a_i$ is unoccupied. If the space is occupied, the car continues down the one-way street and parks in the first available spot it encounters, and if no such spot exists, then it exits the street without parking. 
Given the preference list $\alpha=(a_1,a_2,\ldots,a_m)\in[n]^m$, if (under this parking scheme) all cars are able to park in the first $n$ parking spots, then $\alpha$ is a called an $(m,n)$-parking function.  
For example, $\alpha=(7,5,3,3,2)$ is a $(5,7)$-parking function as, under the parking scheme, cars $1,2,3,4$ and $5$ park in spots $7,5,3,4$ and $2$, respectively, 
which we illustrate in Figure~\ref{fig:75example}. 

\begin{figure}[H]
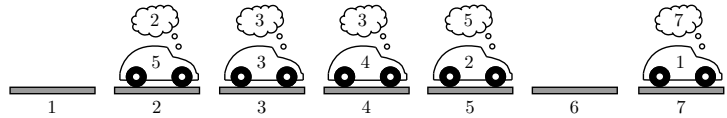

    \centering
    \tikzset{every picture/.style={line width=0.75pt}}
    \scalebox{.7}{
}
    \caption{The parking outcome under the preference list $(7,5,3,3,2)$. The number on the car is its original place in line and the number in the thought bubble above the car is that car's preferred parking spot.}
\label{fig:75example}
\end{figure}

We denote the set of $(m,n)$-parking functions by $\PF_{m,n}$ and, when $m=n$, the $(m,n)$-parking functions are referred to as \textit{parking functions} (of length $n$), and we denote the set by $\PF_n$. 
If $1\leq m\leq n$, then the cardinality of these sets satisfy the following formulas \cite{kenyon2023parking, Konheim1966AnOD}:
\[
    |\PF_{m,n}|=(n-m+1)(n+1)^{m-1} \qquad\mbox{and}\qquad |\PF_{n}|=(n+1)^{n-1}.\]
By the pigeon-hole principle, $\alpha=(a_1,a_2,\ldots,a_m)$ is an $(m,n)$-parking function if and only if 
$$ |\{k\in [m] : a_k \leq i \}| \geq m-n+i,$$ for all $i \in [n]$. Moreover, $\alpha \in \PF_{m,n}$ if and only if the non-decreasing rearrangement of $\alpha$, call it $\alpha'=(a_1',a_2',\ldots,a_m')$ has the property that $a_i' \leq n-m+i$ for each $i \in [m]$.

Konheim and Weiss introduced parking functions in \cite{Konheim1966AnOD}  and since then parking functions have been studied and connected to a variety of mathematical areas. 
For example, parking functions arise in the study of diagonal harmonics, in the computations of volumes of flow polytopes, in the enumeration of Boolean intervals in the weak order of Coxeter groups, in hyperplane arrangements, in ideal states in the game of the Tower of Hanoi, and in the sorting algorithm  $\mathrm{Quicksort}$ \cite{adenbaum2024boolean,Aguillon2022OnPF,Benedetti2018ACM, carlsson2020combinatorial, elder2023boolean, Garsia2012HLO, Haglund2003ACF,Harris2023LuckyCA, Hic2013PFP,Kim2015APF, LOEHR2005408, Mellit2021TBA, Stanley1996HyperplaneAI}.
In addition, many generalizations of parking functions have been developed and studied. 
This includes scenarios in which the parking scheme changes and allows cars to do something different whenever they find their preferred spot occupied \cite{Christensen2019AGO, countingKnaples,fang2024vacillating, MVP}, 
or cars' preferences are restricted to a subset or interval of parking spaces \cite{aguilarfraga2023interval,Hadaway2021GeneralizedPF,Spiro2019SubsetPF}, 
not all spots are available upon the cars arrival \cite{JoinSplit}, 
and cases where cars have varying sizes \cite{assortments,parkingsequences,Count_Assortments}. 
We note that $(m,n)$-parking functions as used here are a special case of the $\mathbf{x}$-parking functions introduced and studied by Pitman and Stanley, \cite{Stanley1996HyperplaneAI} and not the same as the $(m,n)$-parking functions of Aval and Bergeron \cite{aval2015interlaced}. 
For a survey of related results, we point the interested reader to \cite{yan2015parking} and for those interested in open problems in this area we recommend \cite{choose}. 

We introduce 
a new family of parking functions, parametrized by a positive integer $t$, that we call \textit{$t$-metered $(m,n)$-parking functions}. The new parking scheme imposes a maximum occupancy of the previous $t$ car(s) to park on the street. 
This can be equivalently interpreted as imposing a limit on the time that a car can remain parked in a parking spot, and hence the use of the name ``metered'' parking functions.
We make this precise next.
\begin{definition}\label{def: metered}
    Fix a positive integer $t$ and consider $m$ cars, each with a preferred spot, in line waiting to park in $n$ parking spots on the street. 
    Cars will enter one at a time and first attempt to park in their preferred spot before seeking forward for an unoccupied parking spot. 
    After car $j$ parks, car $j-t$ (if it exists) will immediately leave as it has exhausted its meter.
    If, given the preference list $\alpha\in[n]^m$, all cars are able to park on the street under these constraints, then $\alpha$ is a \emph{$t$-metered $(m,n)$-parking function}.
\end{definition}
For example, consider $t=1$, $m=3$, $n=2$, and 
$\alpha=(1,1,1)$. Under the $1$-metered parking scheme:
car~1 enters and parks in spot 1.
Car 2 enters and parks in spot 2, then car 1 immediately leaves. 
When car 3 enters the street, only car 2 is still parked, and hence, car 3 is able to park in spot 1. 
Thus, $\alpha$ is a $1$-metered parking function. We illustrate\footnote{In 1966, Konheim and Weiss~\cite{Konheim1966AnOD} proposed the ``capricious wives'' problem. As modern times have allowed us to reflect on the poor naming conventions of the past, we now refer to the problem as the ``parking function'' problem and appreciate the capriciousness of all motorists.} the process in Figure~\ref{fig:leaving}. On the other hand, $\beta = (1,1,2)$ is not a $1$-metered $(3,2)$-parking function because car $1$ parks in spot $1$,  car $2$ parks in spot $2$, car $1$ vacates spot $1$, then car $3$ is unable to park as spot $2$ is still occupied by car $2$ and there are no remaining spots available on the street.
\begin{figure}[H]
    \tikzset{every picture/.style={line width=0.75pt}} 
\scalebox{.6}{
}
\end{figure}
As the example illustrates, in $t$-metered $(m,n)$-parking functions it is not necessary to restrict to the case $m\leq n$ as is needed in classical $(m,n)$-parking functions.
In fact, as (eventually) the number of cars on the street is always either $t$ or, during arrivals and departures, $t+1$, it suffices to have $t+1$ spots on the street so that any number of cars may be able to find parking on the street.

We let $\MPF_{m,n}(t)$ denote the set of $t$-metered $(m,n)$-parking functions, and let $\mpf_{m,n}(t) = |\MPF_{m,n}(t)|$ denote the cardinality of the set. 
For sake of simplicity, we say $t$-metered parking functions when $m$ and $n$ are clear from context or are arbitrary. 
As stated in the following result from Section~\ref{sec:prelim},
for certain values of $m,n,$ and $t$,
the metered parking scheme is equivalent to the classical parking scheme, and in this way the metered parking functions generalize classical parking functions.
\begin{restatable*}{proposition}{unmetered}\label{prop:unmetered}
    If $t \geq m-1$ and $n \geq m$, then $\MPF_{m,n}(t) = \PF_{m,n}$.
\end{restatable*}

In this work, we study $t$-metered parking functions and give complete characterizations and enumerations when $t=1$, $t=m-2$, and $t=n-1$. 
Before stating these results we remark that, unfortunately, the techniques utilized to characterize and enumerate $t$-metered parking functions for $t=1$, $t=m-2$, and $t=n-1$ do not generalize to other values of $t$. Hence, it remains an open problem to give enumerative formulas for $t$-metered $(m,n)$-parking functions for other values of $t$.

The first case we investigate is $t=1$. We prove that the number of $1$-metered $(m,n)$-parking functions satisfies the following recurrence, which also appears in Lucas numbers and Chebyshev polynomials.

\begin{restatable*}{theorem}{recursiontone}\label{thm:recursion t=1}
    For $m \leq n$, the number of $1$-metered parking functions satisfies the recursion
    $$\mpf_{m+1,n}(1) = n \cdot \mpf_{m,n}(1) - \mpf_{m-1, n}(1),$$ where $\mpf_{1,n}(1)=n$ and we use the convention that $\mpf_{0,n}(1) = 1$. 
\end{restatable*}
Setting $m=n$ in Theorem~\ref{thm:recursion t=1}, the sequence $(\mpf_1(n,n))_{n\geq 1}$, whose first 8 terms are highlighted as the main diagonal in Table~\ref{table_1mpf}, corresponds to the OEIS entry \cite[\seqnum{A097690}]{OEIS}. This sequence gives the numerators of the continued fraction \[n -  \frac{1}{n - \frac{1}{n - \frac{1}{n - \ldots}}},\]  which terminates after $n$ steps. Thus, our result gives a new combinatorial interpretation for these numerators, which satisfy the following closed formula given in OEIS entry \cite[\seqnum{A097690}]{OEIS}.

\begin{restatable*}{corollary}{closedtone}\label{cor:closed t=1}If $n >2 $, then
$$\mpf_{n,n}(1) = \sum_{k=0}^n (n-2)^{n-k} \binom{2n+1-k}{k}.$$    
\end{restatable*}

Our characterization of $(m-2)$-metered parking functions is done by giving a generalization of the classical parking function shuffle of Diaconis and Hicks \cite{DiaHic2017} and extending it to the $(m,n)$-parking function setting, see Theorem~\ref{thm:chartmtwo}. With that result at hand, we give the following enumerative formula.
\begin{restatable*}{theorem}{enumtmtwo}\label{thm:enumtmtwo}
    For $2 < m \leq n+1$, 
    \begin{align*}
    \mpf_{m,n}&(m-2)=(n-m+2)^{{2}}(m-1)^{m-3} \\
    &\qquad+\sum_{k=n-m+3}^n \binom{m-2}{n-k}(n-k+1)^{n-k-1}\Bigg[ \binom{k+1}{2}(n-m+2)k^{(m-n+k-3)} \\
    &\qquad\qquad+\left( k(n-m+1) - \binom{n-m+2}{2} \right)(k-n+m-1)^{k-n+m-3} \\
    &\qquad\qquad\qquad+ \sum_{j=n-m+2}^{k-1} \left(jk - \binom{j+1}{2}\right)\binom{m-2-n+k}{k-1-j}(n-m+1)j^{(j+m-2-n)}(k-j)^{k-j-2} \Bigg].
    \end{align*}
\end{restatable*} 

The values obtained by setting $n = m-1$ in Theorem~\ref{thm:enumtmtwo} have a special combinatorial interpretation.

\begin{restatable*}{corollary}{mtwocor}\label{cor:mtwocor}
    For $m\geq2$, the number of $(m-2)$-metered $(m,m-1)$-parking functions,  $\mpf_{m,m-1}(m-2)$, is equal to the sum of the first entry of all classical parking functions of length $m-1$. This count is given by $$\mpf_{m,m-1}(m-2) = \sum_{i=1}^{m-1} \sum_{s=0}^{m-1-i} \binom{m-2}{s}i(s+1)^{s-1}(m-s-1)^{m-s-3}.$$
\end{restatable*}

When considering $(n-1)$-metered $(m,n)$-parking functions, an interesting pattern emerges: the outcomes of these parking functions are periodic sequences with period $n$, see Theorem~\ref{thm:charn1}. 
We enumerate the $(n-1)$-metered $(m,n)$-parking functions in terms of the number of classical parking functions of length $n$ with certain outcomes, following Spiro \cite{Spiro2019SubsetPF}, and the values in those outcomes. In what follows, we let $\mathfrak{S}_n$ denote the set of permutations of $[n]$ and we write permutations in one-line notation.

\begin{restatable*}{corollary}{nplusk}\label{corr:nplusk}
Fix a permutation $\pi=\pi_1\pi_2\cdots\pi_n\in \mathfrak{S}_n$, and define $L_i(\pi)$ to be the largest $j$, with $1\leq j \leq i$, such that $\pi_i \geq \pi_N$ where $N \in \{i-j,i-j+1,\dots, i-1,i\}$. Then, for $k>0$, $$\mpf_{n+k,n}(n-1) = \sum_{\pi \in \mathfrak{S}_n} \left( \prod_{i=1}^n L_i(\pi) \prod_{j=1}^k \pi_{j \mod{n}}\right).$$
\end{restatable*}

This work is organized as follows.
In Section~\ref{sec:prelim}, we provide preliminary results, including a necessary property of metered parking functions (Proposition~\ref{thm:characterization metered}) and a connection between classical parking functions and metered parking functions (Proposition~\ref{prop:unmetered}). 
In Section~\ref{sec:t=1}, we specialize to the case $t=1$ and provide a complete characterization  for $1$-metered $(m,n)$-parking functions in Theorem~\ref{thm:t1lace_decomp}, as well as enumeration when $m \leq n$ (Theorem~\ref{thm:recursion t=1}).
In Section~\ref{sec:t=m-2}, we characterize $(m-2)$-metered parking functions, defined and study the $(m,n)$-parking function shuffle, and prove Theorem~\ref{thm:enumtmtwo} and Corollary~\ref{cor:mtwocor}.
In Section~\ref{sec:t=n-1}, we characterize $(n-1)$-metered parking functions and establish a formula for their enumeration using Theorem~\ref{thm:charn1} and Corollary~\ref{corr:nplusk}. 
Section~\ref{sec:future} outlines open questions and potential avenues for future work. We conclude with Appendix~\ref{ap:data}, where we provide additional data for the number of $t$-metered $(m,n)$-parking functions for $2\leq t\leq 4$ and $m,n\in [7]$ as well as the number of $t$-metered $(n,n)$-parking functions for $t,n \in [7]$.\footnote{
    New sequence data arising from our enumerations of metered parking functions is under review by the OEIS, with allocated numbers \cite[\seqnum{A372816}, \seqnum{A372817},\seqnum{A372818}, \seqnum{A372819},\seqnum{A372820}, \seqnum{A372821},\seqnum{A372822}]{OEIS}.}

\subsection*{Acknowledgements} The authors extend their thanks to the organizers of SaganFest 2024 for providing a positive environment ripe for building collaborations.
The authors thank Kimberly J. Harry for her suggestion of imposing a time limit to the length a car could remain in any parking spot, which motivated this work.
P.~E.~Harris was partially supported through a Karen Uhlenbeck EDGE Fellowship.

\section{Preliminary Results for $t$-Metered Parking Functions}\label{sec:prelim}

We investigate specific values of $t$ individually, but certain properties hold for many $t$ values. 
A key aspect of the definition in $t$-metered parking functions is that the first $t+1$ cars will first park on the street before the meter with length $t$ runs out,  at which point the first car must exit the street. If there are additional cars in queue, then the lowest numbered car that is currently parked on the street will exit, and the new car will attempt to park on the street, which will have exactly $t$ occupied spots. Hence, after the first $t+1$ cars parked, there will always be $t+1$ cars parked on the street prior to one car exiting and a new one entering.
Thus, we give a necessary condition for a preference list to be a $t$-metered $(m,n)$-parking function. 

\begin{proposition}\label{thm:characterization metered}
    If the preference list $\alpha=(a_1,a_2,\ldots,a_m)\in[n]^m$ is a $t$-metered $(m,n)$-parking function, then, for all $i\in [n]$ and any $j \in [m-t]$,  
\begin{align} 
    |\{ j \leq k \leq j+t : a_k \leq i \}| \geq  t+1-n+i.\label{char eq}
\end{align}
\end{proposition}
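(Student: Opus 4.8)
The plan is to exploit the dynamics described in the paragraph preceding the statement: once the street is loaded, it holds exactly $t+1$ cars at the instant a car finishes parking, immediately before the oldest car departs. Fix $j\in[m-t]$, and first I would identify which cars are simultaneously present on the street at the moment car $j+t$ finishes parking. By the definition, car $\ell$ departs precisely after car $\ell+t$ parks. Thus for $\ell\le j+t$, car $\ell$ is still on the street just after car $j+t$ parks if and only if $\ell+t\ge j+t$, i.e.\ $\ell\ge j$; and cars numbered greater than $j+t$ have not yet entered. Hence at that instant the cars on the street are exactly the block $\{j,j+1,\ldots,j+t\}$ of $t+1$ consecutive cars, and since $\alpha$ is a $t$-metered $(m,n)$-parking function they occupy $t+1$ distinct spots of $[n]$. (For $j=1$ this is just the initial loading phase, in which cars $1,\ldots,t+1$ all park before car $1$ leaves.)

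Next I would run the pigeonhole count that characterizes classical parking functions, but on this sliding window of $t+1$ cars. Fix $i\in[n]$. There are only $n-i$ spots with label exceeding $i$, so at most $n-i$ of the cars $j,\ldots,j+t$ can be parked in a spot greater than $i$. Consequently at least $(t+1)-(n-i)=t+1-n+i$ of these cars are parked in a spot at most $i$.

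Finally I would convert the parking outcome back into a statement about preferences. Because a car only ever drives forward from its preferred spot, any car $k$ that ends up parked in a spot $s\le i$ must satisfy $a_k\le s\le i$. Therefore each of the at least $t+1-n+i$ cars occupying a spot at most $i$ also has preference at most $i$, and so contributes to $\{\,j\le k\le j+t : a_k\le i\,\}$. This yields $|\{\,j\le k\le j+t : a_k\le i\,\}|\ge t+1-n+i$, which is exactly the claimed inequality~\eqref{char eq}.

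The only delicate point is the timing bookkeeping of the first step, namely verifying that the set of cars present when car $j+t$ parks is exactly the window $\{j,\ldots,j+t\}$; everything afterward is the standard $(m,n)$-parking function pigeonhole argument applied to each block of $t+1$ consecutive cars rather than to the entire list. I expect no genuine difficulty beyond stating this timing argument carefully, and I note in particular that the condition is only necessary, not sufficient, since it records occupancy of a single momentary configuration and ignores the interactions between successive windows.
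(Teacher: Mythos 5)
Your proof is correct and follows essentially the same route as the paper's: both identify the block $\{j,\ldots,j+t\}$ as the set of cars simultaneously on the street when car $j+t$ parks and then apply the pigeonhole bound using the $n-i$ spots labelled above $i$. Your version is slightly more explicit about the timing bookkeeping and about converting parked positions back into preference bounds, but the underlying argument is the same.
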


\begin{proof}

Consider within a preference list  $\alpha=(a_1,a_2,\ldots,a_m)$ any set of $t+1$ cars that may be parked on the street at the same time. 
By the pigeonhole principle, there can be at most one car that prefers the spot strictly greater than $n-1$, at most two cars that prefer spots strictly greater than $ n-2$, at most three cars that prefer spots strictly greater than $ n-3$, etc. In other words, there can be at most $n-i$ cars that prefer spots strictly greater than $i$ in each set of $t+1$ adjacent cars. Thus, there are at least $t+1-(n-i) = t+1-n+i$ cars that will prefer spots less than or equal to $i$ among any cars $a_j$ through $a_{j+t}$. 
\end{proof}

Unfortunately, the condition in Proposition~\ref{thm:characterization metered} is not a sufficient condition. Namely, it is not true that any preference list meeting the given inequality conditions is a $t$-metered parking function. We illustrate this next. 

\begin{example}
Consider $t=1, m=3$ and $n=2$ with $\alpha = (1,1,2)$.
We now show this preference list does satisfy the inequality conditions of Proposition~\ref{thm:characterization metered}. 
If $i=j=1$, then~\eqref{char eq} gives
$|\{ 1 \leq k \leq 2 : a_k \leq 1 \}|=|\{1,2\}|=2 \geq  {2}-2+1=1$.
If $i=1$ and $j=2$, then~\eqref{char eq} gives
$|\{ 2 \leq k \leq 3 : a_k \leq 1 \}|=|\{2\}|=1 \geq  {2}-2+1=1$.
Lastly, if $i=2$ and $j=2$, then~\eqref{char eq} gives
$|\{ 2 \leq k \leq 3 : a_k \leq 2 \}|=|\{2,3\}|=2 \geq  {2}-2+2=2$.
Thus, $\alpha$ meets the condition in Proposition~\ref{thm:characterization metered} but in the introduction we showed is not a $1$-metered parking function.
\end{example}

For many values of $t,m,$ and $n$, there are no $t$-metered $(m,n)$-parking functions.
\begin{proposition}
    If $m>n$ and $t \geq n$, then $\mpf_{m,n}(t) = 0$.
\end{proposition}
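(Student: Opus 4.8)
The plan is to show that the parking process must break down for \emph{every} preference list, by tracking how many cars occupy the street over time rather than trying to use the necessary condition of Proposition~\ref{thm:characterization metered} (which is in fact vacuous when $t\geq m$ and is satisfiable even when it applies, so it will not yield a contradiction here). The crucial observation is that a departure is triggered only when car $t+1$ parks, at which moment car $1$ leaves; hence no car ever exits the street before car $t+1$ has successfully parked. Since $t\geq n$ we have $t+1\geq n+1$, and since $m>n$ we have $m\geq n+1$, so car $n+1$ genuinely exists and enters no later than car $t+1$.

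Concretely, I would argue by contradiction. Suppose some $\alpha=(a_1,\dots,a_m)$ were a $t$-metered $(m,n)$-parking function. Then in particular cars $1,2,\dots,n+1$ all park successfully. For each $k\leq n+1$, because the first departure occurs only after car $t+1$ parks and $t+1\geq n+1\geq k$, no car has left the street by the time car $k$ makes its parking attempt; in particular, cars $1,\dots,n$ are all simultaneously parked when car $n+1$ arrives. These $n$ cars occupy $n$ distinct spots, that is, all of $[n]$, leaving no unoccupied spot for car $n+1$. Thus car $n+1$ cannot park, contradicting the assumption. Since no preference list can succeed, $\mpf_{m,n}(t)=0$.

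The one point that needs care, and the main potential obstacle, is the boundary case $t=n$, where car $n+1$ is exactly car $t+1$. Here the departure of car $1$ is triggered by car $t+1=n+1$ itself, but Definition~\ref{def: metered} specifies that car $j-t$ leaves only \emph{after} car $j$ has parked, so car $1$ is still present during car $n+1$'s parking attempt and the argument goes through unchanged. Aside from pinning down this ``park-then-depart'' ordering, the proof is immediate from the inequality $\min(m,t+1)>n$, which encodes the fact that at some moment strictly more than $n$ cars would have to be on the $n$-spot street at once.
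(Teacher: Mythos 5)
Your proof is correct and follows essentially the same argument as the paper: since $t\geq n$, none of the first $n$ cars has departed when car $n+1$ arrives, so all $n$ spots are occupied and car $n+1$ cannot park. Your extra care with the boundary case $t=n$ (confirming the park-then-depart ordering in Definition~\ref{def: metered}) is a nice touch but does not change the substance.
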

\begin{proof}
    Consider a preference list $\alpha\in[n]^m$ with $m>n$ and  $t\geq n$. For $\alpha$ to be a $t$-metered $(m,n)$-parking function, the first $n$ cars would need to be able to park in the $n$ spots on the street. Now, since $t\geq n$, each continues to occupy a spot as the $(n+1)$th car tries to park.
    Hence, no spots are available and thus $\alpha$ cannot be a $t$-metered $(m,n)$-parking function.
\end{proof}

On the other hand, many metered parking functions are also classical parking functions. If $t$ is large enough that no cars leave before the last car parks, then the parking scheme is effectively that of the classical parking functions. It is in this sense that metered parking functions are a generalization of classical parking functions.
We make this precise next.

\unmetered

\begin{proof}
    Let $\alpha=(a_1,a_2,\ldots,a_n)\in \PF_{m,n}(t)$. For $t\geq m-1$, as car $m$ with preference $a_m\in[n]$ attempts to park,  all previous cars continue to occupy  $m-1$ of the $n$ spots on the street. Thus, car $m$  parks if and only if $\alpha\in \PF_{m,n}$. The reverse set inclusion holds as all cars remain on the street when car $m$ enters to~park. 
\end{proof}

Note that proposition~\ref{prop:unmetered} implies that if $m-1 \leq t_1  < t_2$, then $\MPF_{m,n}(t_1) = \MPF_{m, n}(t_2)$. Additionally, an immediate consequence is that the first $t+1$ entries of any $t$-metered parking function will constitute a $(t+1,n)$-parking function.

\begin{remark}\label{t0_tm-1}
    The $0$-metered $(m,n)$-parking functions can be interpreted as a street consisting solely of a no-parking loading zone.
    Namely, the set of $0$-metered $(m,n)$-parking functions
    are simply sequences of length $m$ of the numbers $1$ through $n$. Thus, $\mpf_{m,n}(0) = n^m.$  
\end{remark}

One difficulty in working with $t$-metered parking functions is that they are not permutation invariant, meaning that rearranging the entries of a $t$-metered parking function does not necessarily yield another $t$-metered parking function. 
To make this precise, let $\sigma$ be a permutation on $[m]$, $\alpha=(a_1,a_2,\ldots,a_m)\in[n]^m$, and define
\[\sigma(\alpha)=(a_{\sigma^{-1}(1)},a_{\sigma^{-1}(2)},\ldots,a_{\sigma^{-1}(m)}).\]
For a classical parking function $\alpha\in\PF_{n,n}$, it is true that $\sigma(\alpha)\in\PF_{n,n}$ for all permutations $\sigma$ of $[n]$. However, this is not true for metered parking functions. In fact, even applying cyclic shifts to the entries of a metered parking function need not result in a metered parking function. We give an example of this behavior next.

\begin{example}
    Consider the $2$-metered $(4,6)$-parking function $\alpha = (6,2,5,6)$. If we apply the permutation $\sigma = 2431$ we get $\sigma(\alpha) = (6,6,5,2) \not\in \MPF_{4,6}(2)$. If we perform a cyclic shift once to the left on $\alpha$, we get $(2,5,6,6)$ and one can readily compute that $(2,5,6,6) \not\in \MPF_{4,6}(2)$. 
\end{example}

Another challenge in enumerating $t$-metered parking functions is that they contradict ones' intuition. One may suspect that a $t$-metered parking function should be a $t'$-metered parking function whenever $t'<t$, but this is not true. 
Moreover, it is also not the case that if a preference list is a $t$-metered parking function, then it is also a $t'$-metered parking function when $t<t'$. To give concrete examples, we first define the \textit{outcome} of a $t$-metered $(m,n)$-parking function. 
\begin{definition}\label{def:outcome}
    Given $\alpha\in[n]^m$, under the $t$-metered parking scheme the \textit{outcome of} $\alpha$ is defined as \[\out{t}{n}(\alpha)=(p_1,p_2,\ldots, p_m),\]
    where $p_i=j$ denotes that car $i$ parked in spot $j$ and if car $i$ fails to park, then we write $p_i=\mathsf{X}$. 
\end{definition}
In other parking function settings \cite{countingKnaples}, the outcome of a preference list is a permutation of the number of cars. As Definition~\ref{def:outcome} indicates, this is not the case for $t$-metered parking functions.

\begin{example}
    Consider the preference list $\alpha=(3,3,3,3,4,5,6)\in[6]^7$. Under the $2$-metered parking scheme,  $\alpha$ has outcome $\out{2}{6}(3,3,3,3,4,5,6)= (3,4,5,3,4,5,6)$. 
    However, under the $1$-metered parking scheme, $\alpha$ has outcome $\out{1}{6}(3,3,3,3,4,5,6)=(3,4,3,4,5,6,\mathsf{X})$, and car $7$ fails to park. Thus $\alpha\in\MPF_{7,6}(2)$ but $\alpha\notin\MPF_{7,6}(1)$. 
    On the other hand, consider $\beta=(5,5,5)\in[6]^3$. Under the $1$-meter parking scheme $\beta$ has outcome $\out{1}{6}(5,5,5)=(5,6,5)$. However, under the $2$-meter parking scheme $\beta$ has outcome $\out{2}{6}(5,5,5)=(5,6,\mathsf{X})$, and car $3$ fails to park.
    Thus $\beta\in\MPF_{3,6}(1)$ but $\beta\notin\MPF_{3,6}(2)$.
\end{example}
We can show, however, that classical $(m,n)$-parking functions will be metered parking functions under any meter $t$.
\begin{proposition}
    For $m \leq n$, if $\alpha$ is an $(m,n)$-parking function, then $\alpha$ is a $t$-metered parking function for any $t\geq 0$.
\end{proposition}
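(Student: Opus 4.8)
The plan is to run the classical parking process and the $t$-metered parking process side by side on the same preference list $\alpha\in\PF_{m,n}$, and to control the metered process by the classical one via a set-containment invariant. For each $i$, let $C_i$ denote the set of occupied spots just before car $i$ arrives under the classical scheme, and let $M_i$ denote the corresponding set under the $t$-metered scheme. Because $\alpha$ is a classical $(m,n)$-parking function with $m\le n$, every car parks classically, so when car $i$ arrives it lands in the spot $c_i=\min\{s\ge a_i : s\notin C_i\}\le n$, and $[a_i,c_i-1]\subseteq C_i$ while $c_i\notin C_i$. My central claim, to be proved by induction on $i$, is the invariant $M_i\subseteq C_i$; from this the proposition follows at once, since $M_i\subseteq C_i$ forces the classical landing spot $c_i$ (which is free in $C_i$) to be free in $M_i$ as well, so car $i$ finds a free spot $\ge a_i$ and parks in the metered process too.

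For the base case I would use the timing built into Definition~\ref{def: metered}: no car leaves until after car $t+1$ parks, so the two processes are literally identical for the first $t+1$ cars, giving $M_i=C_i$ for $i\le t+1$; immediately after car $t+1$ parks, car $1$ departs in the metered process, so $M_{t+2}=C_{t+2}\setminus\{p_1\}\subseteq C_{t+2}$, where $p_1$ is car $1$'s (common) spot. (If $m\le t+1$ no departure ever occurs and the claim is trivial, consistent with Proposition~\ref{prop:unmetered}.) For the inductive step, assume $M_i\subseteq C_i$. Let $m_i=\min\{s\ge a_i: s\notin M_i\}$ be the metered landing spot of car $i$. Since $M_i\subseteq C_i$ the complements satisfy (free in $C_i$)$\,\subseteq\,$(free in $M_i$), so $c_i$ is free in $M_i$ and hence $m_i\le c_i$. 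The metered update is $M_{i+1}=(M_i\cup\{m_i\})\setminus\{r\}$, where $r$ is the spot of the departing car $i-t$ (when $i\ge t+1$), while $C_{i+1}=C_i\cup\{c_i\}$; removing $r$ only shrinks the set, so it suffices to show $M_i\cup\{m_i\}\subseteq C_{i+1}$.

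The crux — and the step I expect to be the only real obstacle — is verifying that the spot $m_i$ the metered car occupies is already accounted for in the classical configuration, i.e.\ $m_i\in C_i\cup\{c_i\}$. This is exactly where one must handle the fact that the metered process \emph{refills} previously vacated spots and could, a priori, occupy a cell that classical leaves free. The resolution is a short case split: if $m_i=c_i$ then $m_i\in C_{i+1}$ trivially; and if $m_i<c_i$ then $m_i\in[a_i,c_i-1]$, which is fully occupied classically, so $m_i\in C_i$. In both cases $M_i\cup\{m_i\}\subseteq C_i\cup\{c_i\}=C_{i+1}$, so $M_{i+1}\subseteq C_{i+1}$ and the invariant propagates. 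I would present this case split carefully, since it is precisely the point that rules out the intuitively worrisome scenario in which departures let the metered process pile cars into a region that blocks car $i$ more than the classical process does; the containment $M_i\subseteq C_i$ shows this never happens. Concluding the induction, every car parks in the metered scheme, so $\alpha\in\MPF_{m,n}(t)$ for every $t\ge 0$.
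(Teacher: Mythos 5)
Your proof is correct. It is built on the same underlying idea as the paper's --- an induction that couples the $t$-metered process to the classical one and shows the metered process never falls behind --- but the invariant you carry is genuinely different. The paper tracks the two outcomes carwise, proving $p_i\leq \pi_i$ for every $i$ (metered spot at most classical spot), and closes the inductive step by contradiction: if car $k+1$ were pushed past $\pi_{k+1}$ under the metered scheme, some earlier car $j$ would occupy spot $\pi_{k+1}$, and $p_j\leq\pi_j$ then forces spot $\pi_{k+1}$ to be occupied too early in the classical process. You instead track the full occupied sets and prove $M_i\subseteq C_i$, which yields the conclusion directly (the classical landing spot $c_i$ is automatically free in $M_i$) and replaces the contradiction with a clean two-case verification that $m_i\in C_i\cup\{c_i\}$, using that $[a_i,c_i-1]\subseteq C_i$. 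The set-level invariant is slightly stronger bookkeeping and arguably more transparent, since it makes explicit why departures and refills in the metered process can never block a car more than the classical process does; the paper's carwise invariant is lighter but needs the extra argument locating which car occupies the contested spot. One small point of care in your write-up: $M_i\subseteq C_i$ does not follow formally from the paper's $p_j\leq\pi_j$ alone, so your induction genuinely must be run on the sets, as you do --- make sure the base case is stated for $i=1$ (both sets empty) so the step applies uniformly, with the departure term simply absent when $i\leq t$.
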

\begin{proof}
   Let $\alpha=(a_1,a_2,\ldots,a_m) \in \PF_{m,n}$ and let $\mathcal{O}_n(\alpha)=(\pi_1, \pi_2, \ldots, \pi_m)$ be its outcome under the classical parking scheme. Hence $\pi_i=j$ denotes that car $i$ parked in spot $j$, and note that $\{\pi_1,\pi_2,\ldots,\pi_m\}\subseteq[n]$  with $\pi_i\neq\pi_j$ for all $i\neq j$.
   For some $t\geq 0$, let $\out{t}{n}(\alpha) = (p_1, p_2,\ldots, p_m)$ be the $t$-metered outcome of the preference list $\alpha$. 
   We show by induction on $i$ that $p_i \leq \pi_i$ for all $i \in [m]$. Regardless of the meter $t\geq 0$, the first car will always get its preferred spot. So the outcome for the first car under the classical or metered parking scheme are the same, which implies that  $p_1 = \pi_1 = a_1$. 
   Now, assume that $p_k \leq \pi_{k}$ and consider $i = k+1$. 
   Assume for contradiction that under the metered parking scheme car $k+1$ is forced to park in a spot beyond $\pi_{k+1}$, where $\pi_{k+1}$ is the spot where car $k+1$ parks under the classical parking scheme. 
   In other words, assume that $p_{k+1}>\pi_{k+1}$. 
   This implies that the spots numbered $s$ with $a_{k+1}\leq s\leq \pi_{k+1}$ are all occupied by cars with indices less than or equal to $k$ at the time that car $k+1$ parks under the metered parking scheme. 
   If car $j$ with $j\leq k$ has the outcome $p_j = \pi_{k+1}$, then by our inductive statement we have $p_j \leq \pi_j$ which means that $\pi_{j} \geq \pi_{k+1}$. 
   This implies that in the outcome of $\alpha$ under the classical parking scheme, car $j$ either parks in spot $\pi_{k+1}$ or, spot $\pi_{k+1}$ is already occupied when car $j$ attempts to park.
   This is impossible, because under the classical parking scheme that spot must remain empty until car $k+1$ parks in it. 
   By contradiction, this shows that $p_{k+1} \leq \pi_{k+1}$.  
   Thus, we have established by induction that our claim is true for all $i \in [m]$.  This implies that if under $\alpha$ each car can park using the classical parking scheme, then they will be able to park under the $t$-metered parking scheme.
\end{proof}

Despite the lack of set containment in general for metered parking functions as $t$ varies, our data suggests the following relationship between the number of $t$-metered $(m,n)$-parking functions for different values of $t$. 

\begin{conjecture} \label{conj:future work}
    If $t_1<t_2$, then
    $\mpf_{m,n}(t_2)\leq \mpf_{m,n}(t_1).$
\end{conjecture}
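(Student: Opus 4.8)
The plan is to first reduce to the case of consecutive meters: since the general inequality follows by composing the steps $\mpf_{m,n}(t+1)\le\mpf_{m,n}(t)$, it suffices to construct, for each $t$, an injection $\Phi\colon\MPF_{m,n}(t+1)\hookrightarrow\MPF_{m,n}(t)$. The naive candidates — the identity, a cyclic shift, a reversal, or a relabeling by outcome — are all ruled out by the examples preceding Conjecture~\ref{conj:future work}, which show there is no set containment $\MPF_{m,n}(t+1)\subseteq\MPF_{m,n}(t)$. Thus $\Phi$ must genuinely reroute certain preference lists, and the monotonicity can hold only at the level of cardinalities.

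The structural tool I would exploit is prefix-closure together with a configuration description of the dynamics. Note that $\MPF_{m,n}(t)$ is prefix-closed: if $\alpha=(a_1,\dots,a_m)\in\MPF_{m,n}(t)$, then $(a_1,\dots,a_{m-1})\in\MPF_{m-1,n}(t)$, since appending car $m$ does not affect the trajectory of the first $m-1$ cars. Encode the state just before car $m$ enters by the \emph{configuration}, the age-ordered tuple of currently occupied spots. A preference $a\in[n]$ parks successfully exactly when $a\le\max(\mathrm{free})$, where $\mathrm{free}=[n]\setminus O$ for the occupied set $O$; hence the number of admissible last preferences of $\alpha'$ is precisely $\max(\mathrm{free}_t(\alpha'))$, giving
\[
\mpf_{m,n}(t)=\sum_{\alpha'\in\MPF_{m-1,n}(t)}\max\!\big(\mathrm{free}_t(\alpha')\big).
\]
This reduces the conjecture to comparing these weighted prefix sums across the two meters.

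I would then build $\Phi$ by induction on $m$, carrying along a \emph{simulation (domination) relation} $\preceq$ between meter-$(t+1)$ configurations and meter-$t$ configurations, strong enough that $C\preceq D$ yields an injection $\psi_{C,D}$ from the admissible preferences of $C$ into those of $D$ under which the configurations reached after each one-step transition remain related by $\preceq$. Given such a relation, with base cases for small $m$ furnished by Proposition~\ref{prop:unmetered} (where both sets collapse to $\PF_{m,n}$), the injection extends from length $m-1$ to length $m$ by sending $(\alpha',a_m)$ to $(\Phi(\alpha'),\psi(a_m))$; injectivity of $\Phi$ on prefixes then forces global injectivity.

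The main obstacle is constructing $\preceq$ and verifying the simulation step. The difficulty is already visible for $(3,3,3,3,4,5)$ with $n=6$: run under meter $2$ it leaves spot $6$ free, so $\max(\mathrm{free})=6$, whereas under meter $1$ the largest free spot is only $5$. Hence the longer meter can be temporarily \emph{less} congested at the right end, so $\preceq$ cannot reduce to the identity and must track the entire age-ordered configuration rather than a single statistic such as $\max(\mathrm{free})$; in particular one must control $\max(\mathrm{free})$ of every continuation, not merely of the current prefix. Identifying the correct partial order on configurations under which the one-step parking map is monotone, and checking that this order survives the departure of the oldest car, is where I expect the real work to lie.
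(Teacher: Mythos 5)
First, note that the statement you are trying to prove is Conjecture~\ref{conj:future work}: the paper does not prove it, and explicitly lists it as an open problem in Section~\ref{sec:future}, so there is no proof of record to compare against. Your preliminary observations are sound: the reduction to consecutive meters, the failure of set containment (consistent with the paper's examples such as $(3,3,3,3,4,5,6)$ and $(5,5,5)$), the prefix-closure of $\MPF_{m,n}(t)$, and the identity
\[
\mpf_{m,n}(t)=\sum_{\alpha'\in\MPF_{m-1,n}(t)}\max\bigl(\mathrm{free}_t(\alpha')\bigr)
\]
are all correct, since a car with preference $a$ parks precisely when some spot numbered at least $a$ is free, so the admissible last preferences form the interval $[\max(\mathrm{free})]$.

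The genuine gap is that the entire mathematical content of the conjecture is concentrated in the one object you do not construct: the domination relation $\preceq$ on age-ordered configurations and the accompanying one-step simulation lemma. Everything else in your outline is routine bookkeeping that would follow \emph{if} such a relation existed, but you give no candidate definition, and your own example shows why the obvious candidates fail: after the prefix $(3,3,3,3,4,5)$ with $n=6$, the meter-$2$ configuration has $\max(\mathrm{free})=6$ while the meter-$1$ configuration has $\max(\mathrm{free})=5$, so the meter-$t$ side cannot be required to dominate spot-by-spot, and any viable $\preceq$ must relate configurations of different sizes ($t$ versus $t+1$ occupied spots) while remaining stable under both the arrival transition and the departure of the oldest car. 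There is no argument here that such an order exists, and a priori it may not --- the inequality could hold at the level of cardinalities for global reasons not visible to any step-by-step simulation. As written, the proposal is a research plan that correctly identifies where the difficulty lies, not a proof; to close it you would need to either exhibit $\preceq$ explicitly and verify the simulation step, or replace the injection strategy with a direct comparison of the weighted prefix sums.
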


The concepts of luck and displacement play a role in our study of metered parking functions and hence we recall them now. 
Given a parking function $\alpha\in[n]^m$, car $i$ with preference $a_i$ is \emph{lucky} if it parks in spot $a_i$. 
If instead car $i$ with preference $a_i$ parks in spot $p_i>a_i$, then we say car $i$ is \textit{unlucky}.
We define the \textit{displacement} of car $i$ by 
$d_i(\alpha)=p_i-a_i$, which measures the distance from the car's preferred spot to where it actually parked. 
The displacement of $\alpha$ is defined by  
\[d(\alpha)=\sum_{i=1}^m d_i(\alpha)=\sum_{i=1}^{m}(p_i-a_i),\]
where, for all $i\in[m]$, car $i$ has preference $a_i$ and parks in spot $p_i$.
The lucky statistic of classical parking functions has been studied in \cite{gessel2006refinement} and displacement in \cite{knuth1998linear, yan2015parking}. 
We extend these definition to $t$-metered parking functions analogously: For $\alpha \in \MPF_{m,n}(t)$, let $\lucky(\alpha)$ be the number of cars in $\alpha$ that park in their preference, and $d(\alpha)$ be the sum of the displacement each car's final parking position from its preference. 
Due to the time limit in our metered settings, there are certain bounds on displacement and luck that did not appear in the classical parking function case.
\begin{proposition}\label{prop:disp} 
Let $m,n\in \mathbb{N}$ and $\alpha = (a_1, a_2,\ldots, a_m) \in [n]^m$. Under the $t$-metered parking scheme, if car $i$ can park then $d_i(\alpha) \leq t$ for all $i\in[m]$. 
Moreover, the only cars that may not be able to park are those whose preference satisfies $a_i \geq n-t+1$.
\end{proposition}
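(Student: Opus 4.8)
The plan is to derive both assertions from a single structural invariant: at the instant any car attempts to park, at most $t$ cars occupy the street. First I would establish this invariant by bookkeeping which cars are present when car $i$ enters. At that moment cars $1,\ldots,i-1$ have already parked, and by the metered rule car $k$ has departed precisely once car $k+t$ has parked, i.e. as soon as $k+t\le i-1$. Hence the cars still occupying spots are exactly those indexed by $\{\max(1,i-t),\ldots,i-1\}$, of which there are $\min(t,i-1)\le t$. This is the fact already sketched in the opening of Section~\ref{sec:prelim}, and making it precise is really the only place where the timing of departures relative to arrivals must be handled carefully.

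Next I would deduce the displacement bound. By the parking scheme, car $i$ parks in the first unoccupied spot $p_i\ge a_i$, so every spot in $\{a_i, a_i+1,\ldots,p_i-1\}$ is occupied at the moment car $i$ parks. These are $p_i-a_i=d_i(\alpha)$ distinct spots, each held by a distinct car currently on the street. Since at most $t$ cars are on the street, we get $d_i(\alpha)\le t$ immediately.

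Finally, for the failure condition I would argue by contrapositive. If car $i$ fails to park, then every spot from $a_i$ through $n$ is occupied when car $i$ arrives; that is $n-a_i+1$ occupied spots, each again held by a distinct car on the street, so $n-a_i+1\le t$, which rearranges to $a_i\ge n-t+1$. Equivalently, any car with $a_i\le n-t$ is guaranteed to encounter a free spot at or beyond $a_i$, so the only cars that can possibly fail to park are those whose preference satisfies $a_i\ge n-t+1$.

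The argument presents no substantive obstacle, as the occupancy count does all the work; the one subtlety worth flagging is the edge case of the first $t$ cars, when strictly fewer than $t$ cars are present, together with the convention that a departure occurs immediately after each arrival, so that no car ever competes against more than $t$ already-parked cars.
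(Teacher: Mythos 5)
Your proof is correct and follows essentially the same route as the paper's: both rest on the observation that at most $t$ cars occupy the street when car $i$ arrives, and then count the occupied spots in the window starting at $a_i$. If anything, your version is slightly more careful than the paper's, which asserts ``exactly $t$ cars parked'' (true only for $i>t$), whereas your $\min(t,i-1)\le t$ bookkeeping handles the first $t$ arrivals cleanly.
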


\begin{proof}
    Let $\alpha=(a_1,a_2,\ldots,a_m) \in [n]^m$ be a preference list. Under the $t$-metered parking scheme, 
    when car $i$ attempts to park in spot $a_i$, there will be exactly $t$ cars parked on the street.
    If those cars are parked in spots $a_i$ through $a_1 + t - 1$, then car $i$ will be forced to park in spot $a_i+t$. So $d_i(\alpha)=(a_i+t)-a_i=t$. 
    If the cars are not parked in those spots, then car $i$ will be able to park in one of the spots numbered $a_i$ through $a_1 + t - 1$ and so $d_i(\alpha)<t$. 
    Together both cases imply that $d_i(\alpha)\leq t$, as claimed.
    
    The only case where car $i$ might not be able to park is if some of the aforementioned spots do not exist. That happens precisely when $a_i \geq n-t+1$.
\end{proof}

Note that one implication of Proposition~\ref{prop:disp} is that any preference list $\alpha \in [n-t]^m$ will be a $t$-metered $(m,n)$-parking function.
\begin{corollary}
    For any $n,m,t\in\mathbb{N}$,  $\mpf_{m,n}(t) \geq (n-t)^m$.    
\end{corollary}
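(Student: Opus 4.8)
The plan is to prove the inequality constructively, by exhibiting a family of preference lists that are automatically $t$-metered $(m,n)$-parking functions and then counting them. The right family is singled out by the ``moreover'' clause of Proposition~\ref{prop:disp}: the sequences all of whose entries lie in $\{1,2,\ldots,n-t\}$, that is, the set $[n-t]^m$. The idea is that capping every preference at $n-t$ leaves each car enough room ahead of it that the $t$-bounded displacement can never push it off the end of the street.

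First I would fix an arbitrary $\alpha=(a_1,a_2,\ldots,a_m)\in[n-t]^m$, so that $a_i\leq n-t$ for every $i\in[m]$. Proposition~\ref{prop:disp} guarantees that, under the $t$-metered scheme, the only cars that can possibly fail to park are those whose preference satisfies $a_i\geq n-t+1$. Since no entry of $\alpha$ satisfies this, every car in $\alpha$ parks, and hence $\alpha\in\MPF_{m,n}(t)$. This establishes the inclusion $[n-t]^m\subseteq\MPF_{m,n}(t)$. Counting then finishes the argument: each of the $m$ coordinates of such a sequence may be chosen independently from the $n-t$ available values, so $|[n-t]^m|=(n-t)^m$, and therefore $\mpf_{m,n}(t)=|\MPF_{m,n}(t)|\geq(n-t)^m$.

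There is no genuine obstacle here, as the statement follows directly from Proposition~\ref{prop:disp}; the only point meriting a word of care is the range of validity. The inclusion has real content precisely when $n>t$, where $[n-t]$ is a nonempty index set of size $n-t$ and the argument certifies exactly $(n-t)^m$ parking functions. When $n=t$ the right-hand side equals $0$ and the inequality holds trivially, so the intended regime of the statement is $n\geq t$, in which $(n-t)^m$ is precisely the number of sequences in $[n-t]^m$.
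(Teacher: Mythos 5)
Your proof is correct and is exactly the paper's argument: the corollary is stated immediately after the sentence ``any preference list $\alpha \in [n-t]^m$ will be a $t$-metered $(m,n)$-parking function,'' which is the same appeal to Proposition~\ref{prop:disp} followed by counting $|[n-t]^m|=(n-t)^m$. Your closing remark on the regime $n\geq t$ is a sensible caveat but does not change the substance.
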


We conclude with a counting formula for the number of $t$-metered $(2,n)$-parking functions.

\begin{corollary}\label{cor:m2} 
    For any $t \geq 1$, $\mpf_{2,n}(t) = n^2-1.$
\end{corollary}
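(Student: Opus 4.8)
The plan is to reduce everything to the classical count via Proposition~\ref{prop:unmetered}, treating the degenerate case $n=1$ by hand. Here $m=2$, so the hypothesis $t\geq 1$ is exactly the condition $t\geq m-1$ appearing in that proposition. First I would dispose of $n=1$: the only preference list in $[1]^2$ is $(1,1)$, and under any meter $t\geq 1$ car $1$ parks in spot $1$ and does not vacate before car $2$ attempts to park, so car $2$ fails. Hence $\MPF_{2,1}(t)=\varnothing$ and $\mpf_{2,1}(t)=0=1^2-1$, matching the claimed formula.

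For $n\geq 2$ I would invoke Proposition~\ref{prop:unmetered} directly. Since $t\geq 1=m-1$ and $n\geq 2=m$, the proposition yields the set equality $\MPF_{2,n}(t)=\PF_{2,n}$ for every $t\geq 1$. In particular the cardinality is independent of $t$, which already explains why the final answer carries no dependence on $t$.

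It then remains only to substitute into the enumeration formula for $(m,n)$-parking functions recalled in the introduction, namely $|\PF_{m,n}|=(n-m+1)(n+1)^{m-1}$. Setting $m=2$ gives
\[
\mpf_{2,n}(t)=|\PF_{2,n}|=(n-1)(n+1)^{1}=n^2-1,
\]
completing the count for $n\geq 2$; combined with the $n=1$ computation this establishes the formula for all $n$.

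There is no genuine obstacle here, as the statement is essentially an immediate corollary of Proposition~\ref{prop:unmetered} together with a known cardinality. The only point requiring care is that Proposition~\ref{prop:unmetered} carries the hypothesis $n\geq m$, so it does not cover $n=1$; that is precisely why I would verify the $n=1$ case separately rather than relying on the general reduction.
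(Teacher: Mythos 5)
Your proof is correct, but it takes a slightly different route from the paper's. The paper argues directly: there are $n^2$ preference lists in $[n]^2$, no car ever vacates before car $2$ attempts to park when $t\geq 1$, and the unique list on which car $2$ fails is $(n,n)$, giving $n^2-1$ immediately (an argument that also covers $n=1$ without a separate case). You instead route the claim through Proposition~\ref{prop:unmetered}, obtaining the set equality $\MPF_{2,n}(t)=\PF_{2,n}$ for $n\geq 2$ and then substituting into the known formula $|\PF_{m,n}|=(n-m+1)(n+1)^{m-1}$, with the $n=1$ case checked by hand since the proposition's hypothesis $n\geq m$ excludes it. Both arguments rest on the same observation that the scheme degenerates to classical parking for two cars; yours buys a cleaner structural statement (the sets literally coincide, so $t$-independence is manifest) at the cost of an extra case split, while the paper's bare-hands count is shorter and uniform in $n$. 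Your care in noticing that Proposition~\ref{prop:unmetered} does not apply when $n=1$ is a genuine point in your favor.
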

\begin{proof}
    Note that for two cars with preference in $[n]$, there are a total of $n^2$ possible preference lists. As no cars would ever leave prior to the next car attempting to park, for any $t\geq 1$, of the $n^2$ preference lists, the only preference list that would not allow both cars to park is $(n,n)$. Thus, $\mpf_{2,n}(t)=n^2-1$, as claimed. 
\end{proof}

\section{$1$-Metered Parking Functions}\label{sec:t=1}

In a $1$-metered parking function, each car parks while only the car immediately before it remains on the street. 
As a result, any unlucky car with preference $a_i\in[n]$, would be able to park in spot $a_i+1$, provided $a_i+1\leq n$. 
Thus the only way a car may not be able to park is if it prefers spot $n$, which it finds occupied by the only other car on the street.
The simplicity of this scheme yields many properties for the $1$-metered parking functions. We begin by considering the example below. 

\begin{example}
    The set of $1$-metered $(3,3)$-parking functions, with size $\mpf_{3,3}(1)=21$, is
    $$\MPF_{3,3}(1)=\left\{\begin{matrix}
    (1,1,1) ,(1,1,2) ,(1,1,3) ,(1,2,1) ,(1,2,2) ,(1,2,3) ,(1,3,1) ,(1,3,2) ,(2,1,1) ,(2,1,2) , 
    (2,1,3),\\
    (2,2,1) ,(2,2,2) ,(2,3,1) ,(2,3,2) ,(3,1,1) ,(3,1,2) ,(3,1,3) ,(3,2,1) ,(3,2,2) ,(3,2,3) 
    \end{matrix}\right\}.$$
\end{example}

We characterize $1$-metered $(m,n)$-parking functions by breaking them down into smaller pieces, where each of those pieces is associated with a lucky car. We define this next.

\begin{definition}
    Let $p\in\mathbb{N}$.
     A sequence of the form $$(p,p,p+1,p+2,\dots, p+\ell-2),$$ is called 
    \emph{lace} of length $\ell \geq 2$, and the singleton $(p)$ is called a lace of length $\ell = 1$.
\end{definition}

\begin{definition}\label{def:lace}
    For $0 < m \leq n$, consider $\alpha = (a_1,a_2, \ldots, a_m)\in[n]^m$. The \textit{lace decomposition} of $\alpha$, denoted $\lace(\alpha)$, is defined iteratively by partitioning $\alpha$ (from left to right) into maximally long laces consisting of consecutively indexed entries in $\alpha$.
\end{definition}

\begin{example}
    If  $\alpha = (3,3,3,3,4,6)$, then we can partition $\alpha$ (from left to right) into three maximally long laces: $(3,3)$, $(3,3,4)$, and $(6)$. Hence the lace decomposition of $\alpha$ is given by  $\lace(\alpha)=((3,3),(3,3,4),(6))$. If $\beta=(4,1,2,2,3,4,5,5,1,2)$, then $\lace(\beta)=((4),(1),(2,2,3,4,5),(5),(1),(2))$.
\end{example}

\begin{remark}
    Definition~\ref{def:lace} is a generalization of the block decomposition of unit interval parking functions as studied in \cite{unit_pf}. 
    Namely, restricting to the set of tuples which are unit interval parking functions, which are classical parking functions where cars only tolerate parking in their preference or one unit away from their preference, Definition~\ref{def:lace} is precisely the definition of a block partition \cite[Definition 2.7]{unit_pf}. Moreover, Chaves Meyles et al.~\cite{meyles2023unitinterval} also studied unit interval parking functions in connection to the permutohedron and they also introduce a decomposition of a unit interval parking function into prime components and an operation called pipe. Our lace decomposition aligns with their decomposition into prime components, see \cite[Remark 2.7]{meyles2023unitinterval}.   
\end{remark}

\begin{lemma}\label{lem:characerization via laces}
    Let $0 < m \leq n$ and $\alpha = (a_1,a_2, \ldots, a_m)\in[n]^m$.
    Under the preference list $\alpha$ and $t = 1 $ parking scheme, car $i$ with preference $a_i$ is displaced by one if and only if $a_i$ is not the first entry of a lace in the lace decomposition of $\alpha$.
\end{lemma}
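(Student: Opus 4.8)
The plan is to reduce the statement to a single step-by-step rule for the parking outcome and then match that rule against the combinatorics of the lace decomposition. First I would record the dynamics of the $t=1$ scheme: when car $i$ (with $i\geq 2$) arrives, the only occupied spot is $p_{i-1}$, where car $i-1$ just parked. Hence car $i$ finds its preferred spot free unless $a_i = p_{i-1}$, so car $i$ is lucky with $p_i = a_i$ when $a_i \neq p_{i-1}$, and is pushed forward one spot (displaced by one, with $p_i = p_{i-1}+1$) exactly when $a_i = p_{i-1}$. Car $1$ is always lucky. By Proposition~\ref{prop:disp} the displacement of any car that parks is at most one, so ``displaced by one'' is synonymous with ``unlucky.'' Thus the lemma is equivalent to showing that, for $i\geq 2$, car $i$ is the first entry of a lace if and only if $a_i \neq p_{i-1}$ (car $1$ trivially begins the first lace).

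Next I would analyze a single lace in isolation. Writing a lace as $(p,p,p+1,\dots,p+\ell-2)$ occupying indices $s,s+1,\dots,s+\ell-1$, I would prove by induction on the position $j$ that if the first car $s$ is lucky (so $p_s=p$), then the $j$-th car of the lace has outcome $p+j-1$ and every car other than the first is displaced by one: indeed $a_{s+j-1}=p+j-2=p_{s+j-2}$ for $2\leq j\leq \ell$, so each such car is pushed forward. The payoff is the value of the last car's outcome, $p+\ell-1$, which I would identify with the lace's \emph{continuation value}, i.e.\ the preference that a hypothetical $(\ell+1)$-st entry extending the lace would carry. This identification must be checked in two regimes: the singleton case $\ell=1$, where the continuation value (the required second entry) is $p$ and matches the outcome $p$ of the lone car, and the case $\ell\geq 2$, where it is $p+\ell-1$; the distinction arises because the lace pattern repeats its first value before incrementing.

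With these two ingredients I would run a single induction over the laces from left to right. The first lace starts at car $1$, which is lucky, so the in-lace computation applies and the outcome of its last car equals the continuation value. The maximality built into the lace decomposition then forces the first entry $a_{e+1}$ of the next lace to differ from this continuation value (otherwise the current lace could have been extended), hence $a_{e+1}\neq p_e$ and car $e+1$ is lucky; the in-lace computation applies again, and the induction propagates. This shows simultaneously that every first entry of a lace is lucky and every non-first entry is displaced by one, yielding both directions of the equivalence.

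The step I expect to be the main obstacle is the precise matching in the middle paragraph: verifying that the maximal extension rule of the lace decomposition coincides exactly with the parking rule ``$a_i=p_{i-1}$.'' The mild irregularity of a lace—its first value appears twice before the arithmetic progression begins—means the continuation value is $p$ when extending a singleton but $p+(\text{length})-1$ thereafter, so the base transition from length one to length two needs separate care, and this is exactly where the outcome recurrence must be aligned. Once this bookkeeping is in place the rest is a routine induction. I would also flag the boundary case $a_i=n$: a non-first entry with preference $n$ attempts to move to the nonexistent spot $n+1$ and fails, so the equivalence is understood for cars that successfully park—precisely the condition that later feeds into the characterization of which $\alpha$ lie in $\MPF_{m,n}(1)$.
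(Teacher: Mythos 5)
Your proposal is correct and follows essentially the same route as the paper: the same key observation that under $t=1$ a car is displaced exactly when its preference equals the previous car's parking spot, the same in-lace computation showing all non-first entries of a lace headed by a lucky car are displaced by one, and the same induction showing first entries of laces are lucky, with your ``continuation value'' case split (singleton versus longer lace) matching the paper's case analysis on whether the preceding lace has length one or at least two. Your explicit flagging of the $a_i=n$ boundary case is a sensible clarification the paper leaves implicit.
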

\begin{proof}
    First, note that under the $1$-metered parking scheme, for any $i\in[m]$, car $i$ is displaced by a maximum distance of 1 spot from its preference, and this displacement occurs exactly when car $i-1$, if it exists, parks in spot $a_i$.

    Now recall that by definition of the lace decomposition and the definition of a lace, if the first entry in a lace corresponds to a lucky car, then every other entry in that lace corresponds to a car that is displaced from their preference by one parking spot. 
    Namely, if we assume $a_i$ is the first entry in a lace and $a_j$ is the last entry in the lace, then the lace has structure $(a_i, a_i, a_i + 1, \dots, a_j - 1, a_j)$.
    So assuming that car $i$ parks in its preferred spot, every subsequent car in the lace will have to park one spot away from its preferred~spot. 

    Next, we want to show that a car whose preference is the first entry in a lace corresponds to a lucky car. Namely, if $a_i$ is the first entry in a lace, then car $i$ is lucky. 
    We show this by induction on $i$. In our base case, $i=1$, and we know that $a_1$ is always a lucky car, as it is the first car to enter the street.
    Assume the result holds for all $k$ with $1\leq k<i$ and we now consider what happens at index $i$. There are two cases to consider: either $a_{i-1}$  
    ends a lace of length one or it ends a lace of length greater than one.
    If $a_{i-1}$ ends a lace of length 1, then by our inductive hypothesis, car $i-1$ is lucky.
    Moreover, since $a_{i-1}$ ended a lace the lace decomposition of $\alpha$ ensures that $a_i \neq a_{i-1}$.
    This implies that car $i-1$ parks in spot $a_{i-1}$ while car $i$ can freely park in spot $a_i$, and hence it is lucky.
    On the other hand, if $a_{i-1}$ ends a lace of length at least 2, then by the construction of the lace decomposition, $a_{i} \neq a_{i-1} + 1$. 
    Furthermore, from our inductive hypothesis, we know that the first car in the lace containing car $i-1$ is a lucky car, which therefore tells us that car $i-1$ parks in spot $a_{i-1} + 1$. 
    Thus car $i$ can freely park in spot $a_i$, and is a lucky car, as claimed.

    Putting these facts together, we can now prove the claim.\\
    ($\Rightarrow$) If car $i$ is displaced by one spot, then $a_i$ cannot be the first entry in its lace, since the first entry in every lace corresponds to a lucky car.\\
    ($\Leftarrow$) If $a_i$ is not the first entry in its lace, then, since the first entry in its lace must correspond to a lucky car, car $i$ is displaced by one spot from their preference.
\end{proof}

We now give a characterization for $1$-metered parking functions based on the lace decomposition.

\begin{theorem}\label{thm:t1lace_decomp}
    Let $0 < m \leq n$ and $\alpha = (a_1,a_2, \ldots, a_m)\in[n]^m$.
    Then $\alpha\in\MPF_{m,n}(1)$ if and only if any instance of $n$ in $\alpha$ is located at the beginning of a lace in $\lace(\alpha)$.
\end{theorem}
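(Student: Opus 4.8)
The plan is to reduce the statement to a characterization of when a car fails to park, and then to read off that condition from the lace decomposition via Lemma~\ref{lem:characerization via laces}. The first observation is that in the $1$-metered scheme the only way a car can fail is for it to prefer spot $n$ and find it occupied: by Proposition~\ref{prop:disp} every car that parks is displaced by at most $t=1$, and the only cars that can fail are those with $a_i\geq n-t+1=n$. Hence $\alpha\in\MPF_{m,n}(1)$ if and only if no car $i$ with $a_i=n$ is blocked at its preferred spot (i.e.\ has car $i-1$ parked in spot $a_i$ when it arrives). The whole proof therefore amounts to showing that a car is blocked at its preferred spot exactly when its preference is not the first entry of a lace, which is precisely the content underlying Lemma~\ref{lem:characerization via laces}.

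For the forward direction I would assume $\alpha\in\MPF_{m,n}(1)$, so every car parks and Lemma~\ref{lem:characerization via laces} applies with all displacements actually realized. If some entry $a_i=n$ were not the first entry of its lace, the lemma would force car $i$ to be displaced by one and park in spot $a_i+1=n+1$, which does not exist; this contradicts the assumption that car $i$ parks. Hence every occurrence of $n$ must begin a lace.

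The reverse direction is the more delicate one and is where I expect the main obstacle to lie. Assuming every instance of $n$ begins a lace, I would argue by contradiction: suppose some car fails to park and let car $i$ be the first such car. Because cars $1,\dots,i-1$ all park, the parking process on the prefix $(a_1,\dots,a_i)$ is exactly the one analyzed in Lemma~\ref{lem:characerization via laces}, so the lemma's blocking criterion is valid there. By the failure-mode observation, car $i$ fails only if $a_i=n$ and car $i$ is blocked at spot $n$; the blocking criterion then says $a_i$ is not the first entry of its lace, contradicting the hypothesis. The subtle point to nail down is that whether index $i$ starts a lace depends only on $a_1,\dots,a_i$ (the greedy left-to-right construction fixes every lace boundary up to position $i$ without reference to later entries), so ``being the first entry of a lace'' is genuinely a property of the successfully-parked prefix and may be combined with the lemma. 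I would also take care to phrase the lemma's conclusion in terms of being \emph{blocked} (car $i-1$ occupying spot $a_i$) rather than being ``displaced by one,'' since in the failure case the car is blocked but cannot shift forward; isolating this blocking statement from the lemma's proof is the crux that makes both directions go through.
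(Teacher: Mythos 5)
Your proof is correct and follows essentially the same route as the paper, which likewise deduces the theorem directly from Lemma~\ref{lem:characerization via laces} by observing that failure can only occur when a car preferring spot $n$ is blocked. Your additional care in isolating the ``blocked'' condition from ``displaced by one'' (since a blocked car preferring $n$ cannot actually shift to spot $n+1$) and in noting that lace boundaries up to index $i$ depend only on the prefix $(a_1,\ldots,a_i)$ tightens details the paper leaves implicit, but the underlying argument is the same.
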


\begin{proof}
    This follows from Lemma~\ref{lem:characerization via laces}, since each such sequence is a $1$-metered $(m,n)$-parking function if and only if no car preferring $n$ is displaced.
\end{proof}

Utilizing Theorem~\ref{thm:t1lace_decomp} we can give a simple characterization of $1$-metered $(m,2)$-parking functions.
The enumeration of $1$-metered $(m,2)$-parking functions is presented in Proposition~\ref{prop:t1_n2_count}.

\begin{corollary}\label{1mpf_n2_descrip} 
    Let $m\geq 1$ and $\alpha=(a_1,a_2,\ldots,a_m)\in[2]^m$ with lace decomposition $\lace(\alpha)=(\ell_1,\ell_2,\ldots,\ell_k)$.
    Then $\alpha\in\MPF_{m,2}(1)$ if and only if any lace containing a 2 is a singleton.
\end{corollary}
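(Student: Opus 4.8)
The plan is to reduce everything to Theorem~\ref{thm:t1lace_decomp}, which already tells us that $\alpha\in\MPF_{m,2}(1)$ if and only if every occurrence of $2$ in $\alpha$ sits at the beginning of its lace. So it suffices to translate the condition ``every $2$ is the first entry of its lace'' into the condition ``every lace containing a $2$ is a singleton,'' and the bridge is a complete list of the laces that can possibly occur when $n=2$.

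First I would enumerate the admissible laces. By definition a lace of length $\ell\geq 2$ has the form $(p,p,p+1,\dots,p+\ell-2)$, so its entries range over $\{p,p+1,\dots,p+\ell-2\}$. Since $\alpha\in[2]^m$, we need $p\geq 1$ and $p+\ell-2\leq 2$. This forces $\ell\leq 3$, and a short check shows the only laces available are the singletons $(1)$ and $(2)$, the length-two laces $(1,1)$ and $(2,2)$, and the single length-three lace $(1,1,2)$ (the candidate $(2,2,3)$ is excluded because $3\notin[2]$).

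Next I would determine, for each of these five laces, whether every $2$ it contains occurs at its first position. The singleton $(1)$ and the length-two lace $(1,1)$ contain no $2$ and are vacuously fine; the singleton $(2)$ has its unique $2$ at the start; but $(2,2)$ has a $2$ in its second position and $(1,1,2)$ has a $2$ in its third position, so each of these last two contains a $2$ that is not the first entry of its lace. Hence a $2$ fails to start its lace precisely when that lace is $(2,2)$ or $(1,1,2)$, and these are exactly the non-singleton laces that contain a $2$ (note $(1,1)$ is the only non-singleton lace without a $2$).

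Combining these observations gives both directions at once. The condition of Theorem~\ref{thm:t1lace_decomp} holds if and only if no lace of $\lace(\alpha)$ equals $(2,2)$ or $(1,1,2)$, which by the previous paragraph is equivalent to requiring that no non-singleton lace contains a $2$, i.e., that every lace containing a $2$ is a singleton. I do not anticipate a genuine obstacle here: the entire argument rests on the finiteness of the list of laces over the alphabet $[2]$, and the only point that needs care is checking that no length-$\geq 4$ lace (or the length-three lace $(2,2,3)$) can occur, which the range bound $p+\ell-2\leq 2$ rules out.
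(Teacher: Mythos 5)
Your proof is correct and follows exactly the route the paper intends: the corollary is stated without proof as an immediate consequence of Theorem~\ref{thm:t1lace_decomp}, and your argument simply makes explicit the finite check that the only laces over $[2]$ are $(1)$, $(2)$, $(1,1)$, $(2,2)$, and $(1,1,2)$, of which precisely the non-singleton ones containing a $2$ violate the theorem's condition. No gaps.
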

 
 \subsection{Enumerative Results} 
Table~\ref{table_1mpf} provides data for the number of $1$-metered $(m,n)$-parking functions.  
The entries above the main diagonal (where $m\leq n$) follow a recurrence relation, see Theorem~\ref{thm:recursion t=1}, and we show that the main diagonal (when $n=m$) corresponds to the OEIS entry~\cite[\seqnum{A097690}]{OEIS} and the diagonal $n=m+1$ corresponds to the OEIS entry \cite[\seqnum{A097691}]{OEIS}, see Corollary~\ref{cor:closed t=1}.

\begin{table}[h]
    \centering
    \begin{tabular}{|c||c|c|c|c|c|c|c|}
    \hline
    \backslashbox{$m$ cars}{$n$ spots}
      & \cellcolor{white} \textbf{\quad 1 \quad} & \cellcolor{white} \textbf{\quad 2 \quad}  & \cellcolor{white}  \textbf{\quad 3 \quad}  & \cellcolor{white} \textbf{\quad 4 \quad} & \cellcolor{white} \textbf{\quad 5 \quad} & \cellcolor{white} \textbf{\quad 6 \quad} & \cellcolor{white} \textbf{\quad 7 \quad} \\\hline
     \hline
     \cellcolor{white} \textbf{1}& \cellcolor{lightgray!35} 1 & 2 & 3 & 4 & 5 & 6 & 7 \\
     \hline
     \cellcolor{white} \textbf{2} & 0 & \cellcolor{lightgray!35} 3 & 8 & 15 & 24 & 35 & 48 \\
     \hline
     \cellcolor{white} \textbf{3} & 0 & 4 & \cellcolor{lightgray!35} 21 & 56 & 115 & 204 & 329 \\
     \hline
     \cellcolor{white} \textbf{4} & 0 & 6 & 55 & \cellcolor{lightgray!35} 209 & 551 & 1189 & 2255 \\
     \hline
     \cellcolor{white} \textbf{5} & 0 & 8 & 145 & 780 & \cellcolor{lightgray!35} 2640 & 6930 & 15456\\
     \hline
     \cellcolor{white} \textbf{6} & 0 & 12 & 380 & 2912 & 12649 & \cellcolor{lightgray!35}40391 & 105937 \\
     \hline
     \cellcolor{white} \textbf{7} & 0 & 16 & 1000 & 10868 & 60606 & 235416 & \cellcolor{lightgray!35} 726103 \\
     \hline
    \end{tabular}
    \caption{Number of $1$-metered parking functions.}
    \label{table_1mpf}
\end{table}
\begin{lemma}\label{1m_last_spot_lemma}
    If $1\leq m \leq j < k \leq n$, then the number of 1-metered $(m,n)$-parking functions where the last car parks in spot $j$ is equal to the number of $1$-metered $(m,n)$-parking functions where the last car parks in spot $k$.
\end{lemma}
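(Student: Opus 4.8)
The plan is to induct on the number of cars $m$, using the lace decomposition (Definition~\ref{def:lace}) to pin down exactly where the last car parks. By Lemma~\ref{lem:characerization via laces}, in a $1$-metered parking function car $m$ parks in spot $(\text{start of the last lace})+(\text{length of the last lace})-1$: if the last lace is a singleton $(p)$ then car $m$ is lucky and parks in $p$, while otherwise the last lace is $(p,p,p+1,\dots)$ of length $\lambda\geq 2$ ending in value $p+\lambda-2$, so car $m$ is displaced by one and parks in $p+\lambda-1$. Writing $f_m(j)$ for the number of $1$-metered $(m,n)$-parking functions whose last car parks in spot $j$, I will show that $f_m(j)$ is independent of $j$ on the range $j\in\{m,\dots,n\}$; the lemma is then the special case comparing $f_m(j)$ and $f_m(k)$.

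The key step is to split the count by the length $\lambda$ of the last lace. When $\lambda=m$ the whole list is a single lace starting at $p=j-m+1$, which requires $p\geq 1$ (i.e.\ $j\geq m$) and $p+m-1=j\leq n$, so for each $j\in\{m,\dots,n\}$ there is exactly one such parking function, contributing $1$ uniformly. When $1\leq\lambda\leq m-1$, deleting the last lace leaves a valid $1$-metered $(m-\lambda,n)$-parking function (the prefix $(a_1,\dots,a_{m-\lambda})$, whose parking is unaffected by later cars), and conversely appending a lace of length $\lambda$ starting at $p=j-\lambda+1$ is valid exactly when $p$ differs from the spot $s$ where car $m-\lambda$ parks, this being the maximality/boundary condition forcing the appended car to start a fresh lace. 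Hence the number of such parking functions equals $N_{m-\lambda}-f_{m-\lambda}(j-\lambda+1)$, where $N_{m-\lambda}=\mpf_{m-\lambda,n}(1)$ is the total count of prefixes and the subtracted term removes precisely the prefixes blocked by the boundary condition. Summing over $\lambda$ gives, for $j\in\{m,\dots,n\}$, the identity $f_m(j)=1+\sum_{\lambda=1}^{m-1}\bigl(N_{m-\lambda}-f_{m-\lambda}(j-\lambda+1)\bigr)$, so the only $j$-dependence lives in the terms $f_{m-\lambda}(j-\lambda+1)$. Since $m\leq j<k\leq n$ yields $m-\lambda< j-\lambda+1< k-\lambda+1\leq n$, the inductive hypothesis at length $m-\lambda<m$ gives $f_{m-\lambda}(j-\lambda+1)=f_{m-\lambda}(k-\lambda+1)$, and therefore $f_m(j)=f_m(k)$.

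The main obstacle is exactly this lace-maximality (boundary) constraint: one might hope to biject the two sets by shifting the start value of the last lace by $k-j$, but the shifted start can collide with the spot occupied by the previous car, destroying the lace decomposition. Routing around this via the inclusion–exclusion count $N_{m-\lambda}-f_{m-\lambda}(\cdot)$ together with the induction is the crux, and it is also where the hypothesis $j\geq m$ is essential: it guarantees that the single-lace ($\lambda=m$) term contributes $1$ rather than $0$, making that part of the count genuinely uniform. For $j<m$ the count does differ, as the $1$-metered $(2,3)$-parking functions show, where car $2$ parks in spot $1$ only twice but in spots $2$ and $3$ three times each. The base case $m=1$ is immediate, since the single car always parks in its preference, so $f_1(j)=1$ for every $j\in\{1,\dots,n\}$, and the induction then closes.
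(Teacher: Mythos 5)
Your proof is correct, but it rests on a different decomposition than the paper's. The paper also inducts on $m$, yet it strips only the last \emph{entry}: a $1$-metered $(m,n)$-parking function whose last car parks in spot $j$ must end in $j$ (with a prefix whose last car does not park in spot $j$) or in $j-1$ (with a prefix whose last car parks in spot $j-1$), giving the three-term recursion $z_j(m,n)=\mpf_{m-1,n}(1)-z_j(m-1,n)+z_{j-1}(m-1,n)$, after which translation-invariance of the inductive hypothesis closes the argument exactly as in your final step. You instead strip the entire last lace, which leans on Lemma~\ref{lem:characerization via laces} and Theorem~\ref{thm:t1lace_decomp} and produces the full-history recursion $f_m(j)=1+\sum_{\lambda=1}^{m-1}\bigl(N_{m-\lambda}-f_{m-\lambda}(j-\lambda+1)\bigr)$. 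Both arguments turn on the same two points: the excluded prefixes are counted by the same statistic at a shifted spot, and the hypothesis $j\geq m$ keeps every shifted spot inside the range where the inductive hypothesis applies (and makes the single-lace term contribute $1$ uniformly). The paper's version is lighter and is reused verbatim to prove Lemma~\ref{lemma_mpf_last_count} and Theorem~\ref{thm:recursion t=1}; yours buys a more structural explanation of why each lace length contributes uniformly, at the cost of heavier machinery. One point you should make explicit: when you append the lace starting at $p=j-\lambda+1$, validity as a $1$-metered parking function also requires (via Theorem~\ref{thm:t1lace_decomp}) that no non-initial entry of the appended lace equals $n$; this holds because those entries are at most $j-1\leq n-1$, but it is part of what ``valid'' must mean in your counting step and deserves a sentence.
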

\begin{proof}
    Let $Z_i(m,n)$ be the number of $1$-metered $(m,n)$-parking functions where the last car (car $m$) parks in spot $i$, and let $z_i(m,n) = |Z_i(m,n)|$.  For a set of sequences $X$, let $\mathcal{A}^i(X)$ be the set containing each $x \in X$ with $i$ appended to the end, and note that $|X| = |\mathcal{A}^i(X)|$. We want to show (for a fixed $n$) that $z_j(m,n) = z_k(m,n)$ if $m \leq j < k \leq n$.  
    
    We proceed by induction on $m$. When $m=1$, we have $Z_j(1,n) = \{j\}$ and $Z_k(1,n) = \{k\}$ so $z_{j}(1,n) = z_{k}(1,n)$. Assume for induction that when $m=h<n$, our claim is true. That is, $z_{j}(h,n) = z_{k}(h,n)$ for any $h \leq j < k \leq n$. 
    
    We now consider $m= h+1$.
    Pick any $j,k$ such that $h+1 \leq j< k \leq n$. Every element of $Z_j(h+1,n)$ is a $1$-metered $(h+1,n)$-parking function where the last car (car $h+1$) parks in spot $j$.  
    Thus, each of these sequences either ends in $j$ or in $j-1$, and what precedes it  (namely the tuple consisting of the first $h$ entries) will inherently be a $1$-metered $(h,n)$-parking function. We denote these two disjoint subsets of $Z_j(h+1,n)$ as $S_j$ and $S_{j-1}$ respectively.
   
    We claim that $S_j = \mathcal{A}^j(\MPF_{h,n}(1)) \setminus \mathcal{A}^j(Z_{j}(h,n))$ because it contains all of the $1$-metered $(h,n)$-parking functions with a $j$ appended except for those in which the final car can not park in its preferred spot of $j$. 
    This only happens if the car immediately before the last car parked in spot $j$. 
    Hence, $S_j$ is given by taking all of the $1$-metered $(h,n)$-parking functions where the last car parks in $j$ and appending a $j$. 
    On the other hand, in $S_{j-1}$ the second to last car must park in spot $j-1$ so that the last car will park in spot $j$. Thus $S_{j- 1} = \mathcal{A}^{j-1}(Z_{j-1}(h,n))$. Since $S_j$ and $S_{j-1}$ are disjoint and comprise all of $Z_j(h+1,n)$, we have that $|Z_j(h+1,n)| = | \mathcal{A}^j(\MPF_{h,n}(1)) \setminus \mathcal{A}^j(Z_{j}(h,n))| + |\mathcal{A}^{j-1}(Z_{j-1}(h,n))|$. Therefore, 
    $$z_j(h+1,n) = \mpf_{h,n}(1) - z_j(h,n) + z_{j-1}(h,n).$$
    Using the same logic, $z_k(h+1,n) = \mpf_{h+1,n}(1) - z_{k}(h,n) + z_{k-1}(h,n)$. By our inductive assumption, $z_j(h,n) = z_{k}(h,n)$ and $z_{j-1}(h,n)=z_{k-1}(h,n)$ because $j, k \geq h+1$ implies that $j,k \geq h$ and $j-1, k-1 \geq h$. Therefore, $z_j(h+1,n) = z_{k}(h+1,n)$ and we have proved our claim.
\end{proof}
Next we give another supporting result which will play a role in our subsequent enumerative results. 
\begin{lemma}\label{lemma_mpf_last_count} 
    If $m \leq n$, then $\mpf_{m-1,n}(1)$ counts the number of  $1$-metered $(m,n)$-parking functions where the last car parks in spot $n$.
\end{lemma}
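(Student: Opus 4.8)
The plan is to compute $z_n(m,n)$, the number of $1$-metered $(m,n)$-parking functions whose last car parks in spot $n$, by reusing the recursive bookkeeping already set up in the proof of Lemma~\ref{1m_last_spot_lemma} and then cancelling two terms via the symmetry that same lemma supplies. First I would invoke Proposition~\ref{prop:disp}, which guarantees that under the $1$-metered scheme every car is displaced by at most one spot. Consequently the last car reaches spot $n$ in exactly two mutually exclusive ways: either $a_m = n$ and car $m-1$ does not occupy spot $n$ (so car $m$ is lucky), or $a_m = n-1$ and car $m-1$ occupies spot $n-1$ (so car $m$ is displaced by one). These are precisely the sets $S_n$ and $S_{n-1}$ from the proof of Lemma~\ref{1m_last_spot_lemma} specialized to $j=n$, and deleting the final entry of any sequence in $Z_n(m,n)$ leaves a $1$-metered $(m-1,n)$-parking function.

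Carrying out that identification, the lucky case contributes $\mathcal{A}^n(\MPF_{m-1,n}(1)) \setminus \mathcal{A}^n(Z_n(m-1,n))$ (all $1$-metered $(m-1,n)$-parking functions with an $n$ appended, except those whose last car already sits in spot $n$), while the displaced case contributes $\mathcal{A}^{n-1}(Z_{n-1}(m-1,n))$. Taking cardinalities yields exactly the recursion established in the proof of Lemma~\ref{1m_last_spot_lemma}, namely
\[
z_n(m,n) = \mpf_{m-1,n}(1) - z_n(m-1,n) + z_{n-1}(m-1,n),
\]
which is valid since $m \leq n$ forces the index $j=n$ to satisfy the hypothesis $j \geq m$ needed to split $Z_n(m,n)$ into these two pieces.

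The final step is to cancel the two correction terms. Because $m \leq n$ gives $m-1 \leq n-1 < n$, Lemma~\ref{1m_last_spot_lemma} applies with car count $m-1$ and the two spots $n-1 < n$, yielding $z_{n-1}(m-1,n) = z_n(m-1,n)$. Substituting into the recursion collapses the right-hand side to $\mpf_{m-1,n}(1)$, which is the claim. The degenerate case $m=1$ is immediate from the convention $\mpf_{0,n}(1)=1$, since $Z_n(1,n) = \{(n)\}$, so one may assume $m \geq 2$ when applying the recursion.

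I expect the only genuinely delicate point to be justifying that the case split is both exhaustive and disjoint: one must use Proposition~\ref{prop:disp} to rule out any preference below $n-1$ reaching spot $n$, and one must correctly exclude, within the lucky case, the configurations in which car $m-1$ already occupies spot $n$ (which would leave car $m$ with no spot $n+1$ to fall back to, hence no valid parking function). Everything after this verification is the mechanical reuse of the already-derived recursion together with the symmetry of Lemma~\ref{1m_last_spot_lemma}.
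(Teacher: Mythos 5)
Your proposal is correct and follows essentially the same route as the paper's own proof: split $Z_n(m,n)$ according to whether the last entry is $n$ or $n-1$, identify the two pieces as $\mathcal{A}^n(\MPF_{m-1,n}(1)\setminus Z_n(m-1,n))$ and $\mathcal{A}^{n-1}(Z_{n-1}(m-1,n))$, derive the recursion $z_n(m,n)=\mpf_{m-1,n}(1)-z_n(m-1,n)+z_{n-1}(m-1,n)$, and cancel the correction terms via Lemma~\ref{1m_last_spot_lemma} applied with $m-1\leq n-1<n$. The only differences are cosmetic (the paper names the two pieces $R_n$ and $R_{n-1}$, and it leaves the $m=1$ base case implicit).
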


\begin{proof} 
    Consider $Z_i(m,n)$, $z_i(m,n)$, and $\mathcal{A}^i(X)$ as defined in the proof of Lemma~\ref{1m_last_spot_lemma}. We need to show that $\mpf_{m-1,n}(1) = z_n(m,n)$. Every parking function in $Z_n(m,n)$ either ends in an $n$ or an $n-1$ since these are the only cars that park in spot $n$.
    Let $R_n$ be defined so that $\alpha \in R_n$ if and only if $\alpha \in Z_n(m,n)$ and the last entry in $\alpha$ is $n$. Define $R_{n-1}$ analogously, but the last entry of each element is $n-1$. So, $z_n(m,n) = |R_n| + |R_{n-1}|$.

    A prefix (complete subsequence starting at the beginning) of any $t$-metered parking function is also a $t$-metered parking function, so each $\alpha \in R_n$ is a $1$-metered $(m-1,n)$-parking function with $n$ appended. 
    Further, any $1$-metered $(m-1,n)$-parking function whose last car does not park in spot $n$ will appear in $R_n$ with $n$ appended. That is, 
    \[R_n = \mathcal{A}^n\big(\MPF_{m-1,n}(1) \setminus Z_{n}(m-1,n)\big),\]
    and $|R_n| = \mpf_{m-1,n}(1) - z_{n}(m-1,n)$. Because the last car of each sequence in $R_{n-1}$ prefers spot $n-1$ but parks in spot $n$, the second to last car must park in $n-1$. 
    Again, we can consider the elements of $R_{n-1}$ in terms of prefixes to see that $R_{n-1} = \mathcal{A}^{n-1}({Z_{n-1}(m-1,n)})$, and so $|R_{n-1}| = z_{n-1}(m-1,n)$.  We have now shown that 
    \begin{align}
        z_n(m,n) = \mpf_{m-1,n}(1) - z_{n}(m-1,n) + z_{n-1}(m-1,n).\label{eq:almost there}
    \end{align} 
    Since $m-1 \leq n-1 < n $, Lemma~\ref{1m_last_spot_lemma} tells us that $z_{n}(m-1,n) = z_{n-1}(m-1,n)$. Substituting $z_{n}(m-1,n) = z_{n-1}(m-1,n)$ into~\eqref{eq:almost there}, yields  $z_n(m,n) = \mpf_{m-1,n}(1)$, which proves the claim.
\end{proof}

We are now ready to establish a recursive formula for the number of $1$-metered $(m,n)$-parking functions.
\recursiontone
\begin{proof}
    Consider the set $\MPF_{m+1,n}(1)$. Every element in this set is a $1$-metered $(m,n)$-parking function with an additional entry appended to the end. 
    Note that appending $i\leq n-1$ to any $\alpha \in \MPF_{m,n}(1)$ yields a $1$-metered parking function because the only car that might not be able to park is one that prefers spot $n$.  
    In fact, the only time that appending a new entry to the end of $\alpha\in\MPF_{m,n}(1)$ would cause a car to fail to park is if the final car in $\alpha$ parks in spot $n$ and the new entry appended to $\alpha$ is $n$. Thus 
    \[\MPF_{m+1,n}(1) = \cup_{i=1}^n \mathcal{A}^i(\MPF_{m,n}(1) )\setminus \mathcal{A}^n(Z_n(m,n)).\]
    
    Because each $\mathcal{A}^i(\MPF_{m,n}(1))$ is disjoint and $|\mathcal{A}^i(\MPF_{m,n}(1))| = \mpf_{m,n}(1)$, this implies $\mpf_{m+1,n}(1) = n \cdot \mpf_{m,n}(1) - z_n(m,n)$. By Lemma~\ref{lemma_mpf_last_count}, this is equivalent to $\mpf_{m+1,n}(1) = n \cdot \mpf_{m,n}(1) - \mpf_{m-1,n}(1)$.
\end{proof}

The recursion in Theorem~\ref{thm:recursion t=1} is a special case of the recursion used in \emph{Lucas Sequences}. The Lucas sequence of the first kind $U_n(P,Q)$ is defined by the recurrence relations $U_0(P,Q)=0$, $U_1(P,Q)=1$, and $$U_n(P,Q) = P \cdot U_{n-1}(P,Q) - Q \cdot U_{n-2}(P,Q),$$ for $n>1$ \cite{Rib1, Rib2}. By this recursion, the sequence $(\mpf_{m,n}(1))_{m \geq 0}$ for a fixed $n$ is equivalent to the Lucas sequence $U_{m+1}(n,1)$ for $m<n$. Many properties are inherited from the Lucas sequence. For example, $$\mpf_{m,n}(1) = \frac{a^{m+1}-b^{m+1} }{a-b} \text{\qquad where $a$ and $b$ solve \quad} x^2 - nx + 1 =0 \text{\quad when} \quad n>2.$$

The sequence $(\mpf_{n,n}(1))_{n \geq 1}$ corresponds to the $n^{th}$ terms of the Lucas sequences $L(n,1)$, which is counted in OEIS sequence \cite[\seqnum{A097690}]{OEIS}. This sequence is also the \emph{Chebyshev polynomial of the second kind} \cite{Charafi1992ChebyshevPA}, denoted $U(n,x)$, evaluated at $x = \frac{n}{2}$. The Chebyshev polynomial of the second kind $U(n,x)$ is defined by the recursion $$U(n,x) = 2xU(n,x) - U(n-1,x) \text{\quad with \quad} U(0,x) = 1, \quad U(1,x) = 2x.$$
This sequence has generating function $$\frac{1}{1-nx + x^2},$$ from which we now give a closed formula in the special case of $m=n$ following \cite[\seqnum{A097690}]{OEIS}.

\closedtone

Utilizing the recursive formula in Theorem~\ref{thm:recursion t=1} and solving the associated characteristic polynomial, yields the following.

\begin{corollary}
    If $m \leq n + 1$ and $n > 2$, then $$\mpf_{m,n}(1) = \frac{n(n+\sqrt{n^2-4})-2}{n(n+\sqrt{n^2-4})-4} \cdot \left(\frac{n + \sqrt{n^2 -4}}{2}\right)^m + \frac{n(n-\sqrt{n^2-4})-2}{n(n-\sqrt{n^2-4})-4} \cdot \left(\frac{n - \sqrt{n^2 -4}}{2}\right)^m.$$
\end{corollary}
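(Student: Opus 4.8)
The plan is to solve the second-order linear recurrence of Theorem~\ref{thm:recursion t=1} by the standard characteristic-polynomial method, and then massage the resulting constants into the stated form. Writing $u_m = \mpf_{m,n}(1)$ for fixed $n$, the recurrence $u_{m+1} = n u_m - u_{m-1}$ has characteristic polynomial $x^2 - nx + 1 = 0$, whose roots are
$$a = \frac{n + \sqrt{n^2-4}}{2}, \qquad b = \frac{n - \sqrt{n^2-4}}{2}.$$
Because $n > 2$, the discriminant $n^2 - 4$ is strictly positive, so $a \neq b$ are real, and the general solution to the recurrence is $u_m = A a^m + B b^m$ for constants $A, B$ to be determined.

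Next I would pin down $A$ and $B$ using the two initial conditions $u_0 = \mpf_{0,n}(1) = 1$ and $u_1 = \mpf_{1,n}(1) = n$ recorded in Theorem~\ref{thm:recursion t=1}. Since $a+b = n$ is the sum of the roots, these yield the linear system $A + B = 1$ and $Aa + Bb = a + b$, whose solution is $A = a/(a-b)$ and $B = -b/(a-b)$, giving
$$u_m = \frac{a^{m+1} - b^{m+1}}{a - b},$$
which recovers the Binet-type formula already noted in the text. Because $\{a^m\}$ and $\{b^m\}$ each satisfy the recurrence and the constants are fixed by $u_0$ and $u_1$, a short induction on $m$ confirms that this formula agrees with $\mpf_{m,n}(1)$ for every $m$ in the range where the recursion of Theorem~\ref{thm:recursion t=1} is valid; since that recursion produces $\mpf_{m+1,n}(1)$ for each $m \leq n$, the closed form holds precisely for $m \leq n+1$, matching the hypothesis.

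The only remaining work, and the one genuinely fiddly step, is the algebraic identification of the coefficients $a/(a-b)$ and $-b/(a-b)$ with the expressions displayed in the statement. Setting $s = \sqrt{n^2-4}$ so that $a - b = s$ and $s^2 = n^2 - 4$, I would rewrite $a/(a-b) = (n+s)/(2s)$ and clear denominators against the claimed coefficient $\dfrac{n(n+s)-2}{n(n+s)-4}$; substituting $n^2 = s^2 + 4$ collapses the numerator $n(n+s) - 2$ to $s^2 + ns + 2$ and the denominator $n(n+s) - 4$ to $s(s+n)$, after which the stated equality reduces to the identity $(n+s)^2 = 2(s^2 + ns + 2)$, i.e.\ $n^2 - s^2 = 4$, which holds by definition of $s$. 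The symmetric computation with $b$ and $n - s$ handles the second coefficient. I expect this coefficient-matching to be the main (though entirely routine) obstacle, since all of the conceptual content already resides in the characteristic-root solution.
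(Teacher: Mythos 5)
Your proposal is correct and follows essentially the same route as the paper: solve the recurrence of Theorem~\ref{thm:recursion t=1} via the characteristic polynomial $x^2-nx+1$ and determine the two constants from initial data. The only (immaterial) difference is that you fit the constants to $u_0=1$, $u_1=n$ and pass through the Binet-type formula $\frac{a^{m+1}-b^{m+1}}{a-b}$ before matching coefficients, whereas the paper fits them directly to $f(1)=n$ and $f(2)=n^2-1$ (the latter from Corollary~\ref{cor:m2}); your coefficient-matching algebra checks out.
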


\begin{proof}
    This result arises from looking at the recursive formula $\mpf_{m,n}(1) = n\cdot \mpf_{m-1,n}(1) - \mpf_{m-2,n}(1)$. Fixing $n$ and thinking of this as a recursive formula on $f(m)$ where $f(m) = \mpf_{m,n}(1)$, we observe that the characteristic polynomial of $f(m)$ is $x^2-nx + 1$, which has roots
    
    $$\frac{n \pm \sqrt{n^2 - 4}}{2}.$$
    
    Therefore, we can write a closed formula for $f(m)$ as 
    
    $$f(m) = \alpha\left(\frac{n +\sqrt{n^2 - 4}}{2}\right)^m + \beta \left(\frac{n - \sqrt{n^2 - 4}}{2}\right)^m,$$
    
    for some constants $\alpha$ and $\beta$. Furthermore, we know that $f(1) = n$, and, by Corollary~\ref{cor:m2}, we know $f(2) = n^2 - 1$. With this information, we can set up the following system of linear equations
    \begin{align*}
        \alpha\left(\frac{n +\sqrt{n^2 - 4}}{2}\right) + \beta \left(\frac{n - \sqrt{n^2 - 4}}{2}\right) &= n\\
        \alpha\left(\frac{n +\sqrt{n^2 - 4}}{2}\right)^2 + \beta \left(\frac{n - \sqrt{n^2 - 4}}{2}\right)^2 &= n^2-1.
    \end{align*}
    Evaluating this system yields 
    \[
    \alpha  = \frac{n(n+\sqrt{n^2-4})-2}{n(n+\sqrt{n^2-4})-4}\mbox{\quad and \quad}
    \beta = \frac{n(n-\sqrt{n^2-4})-2}{n(n-\sqrt{n^2-4})-4}, 
    \]
    from which the result follows.
\end{proof}
\begin{remark}
    The sequence $(\mpf_{n,n}(1))_{n \geq 1}$ also corresponds to the sequence of numerators of the continued fraction \[n -  \frac{1}{n - \frac{1}{n - \frac{1}{n - \ldots}}},\] which terminates after $n$ steps. Additionally, the sequence $(\mpf_{n-1,n}(1))_{n \geq 2}$ corresponds to the denominators of the same continued fraction.  A similar phenomenon was observed in \cite{fang2024vacillating}, where the number of $1$-vacillating parking functions of length $n$ was shown to be the numerator of the $n$th convergent of the continued fraction expansion of $\sqrt{2}$. 
\end{remark}

Much like the main diagonal, the diagonals directly above it are also given by evaluations of Chebyshev polynomials of the second kind, which follows from our overall recursion. The $n=m+2$ diagonal of Table~\ref{table_1mpf} corresponds to OEIS sequence \cite[\seqnum{A342167}]{OEIS} and the $n=m+3$ diagonal correspond \cite[\seqnum{A342168}]{OEIS}, which are evaluations of $U(n,\frac{n+2}{2})$ and $U(n, \frac{n+3}{2})$ respectively. The diagonal $n=m+1$ corresponds to $U(n,\frac{n+1}{2})$, which happens to be the same as the denominators described in the remark above.

It remains an open question to enumerate $1$-metered $(m,n)$-parking functions when $m>n$, we state this formally in Problem~\ref{open:m>n}. We can, however, enumerate $1$-metered $(m,2)$ parking functions for all $m$ with a separate closed formula which matches with OEIS sequence \cite[\seqnum{A029744}]{OEIS}.
\begin{proposition} \label{prop:t1_n2_count}
    For $m>2$, \[
    \mpf_{m,2}(1) =
    \begin{cases}
        2^{\frac{m+1}{2}} & \text{if $m$ is odd,}\\
        3 \cdot 2^{\frac{m}{2} - 1} & \text{if $m$ is even.}    
    \end{cases}\]
\end{proposition}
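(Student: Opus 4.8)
The plan is to bypass the general recursion of Theorem~\ref{thm:recursion t=1} (which only applies for $m\le n$) and instead analyze the $1$-metered parking process directly, exploiting how rigid the state of the street is when $n=2$. The key observation is that when car $i$ (for $i\ge 2$) attempts to park, exactly one car is present, namely car $i-1$, occupying spot $p_{i-1}\in\{1,2\}$; car $1$ always parks at $p_1=a_1$. Since $t=1$, Proposition~\ref{prop:disp} tells us each car is displaced by at most $1$, so I would record the transition explicitly: if $p_{i-1}=1$, then car $i$ parks successfully regardless of its preference (it takes spot $2$ whether $a_i=1$, by being bumped, or $a_i=2$), and $p_i=2$; if instead $p_{i-1}=2$, then car $i$ parks only when $a_i=1$ (giving $p_i=1$), while $a_i=2$ causes a failure. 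The upshot I want to extract is that in every valid $\alpha\in\MPF_{m,2}(1)$ the parking spots must \emph{alternate}, $p_1,p_2,p_3,\dots = 1,2,1,2,\dots$ or $2,1,2,1,\dots$, entirely determined by the choice of $a_1$.

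With the alternation established, the count factors as a product of local choices: for each $i\ge 2$ there are exactly $2$ admissible values of $a_i$ when $p_{i-1}=1$ and exactly $1$ admissible value when $p_{i-1}=2$. I would then split on $a_1$. If $a_1=1$ the spots satisfy $p_j=1$ precisely when $j$ is odd, so the factor is $2$ exactly at the even indices $i\in\{2,\dots,m\}$, giving $2^{\lfloor m/2\rfloor}$ admissible lists; if $a_1=2$ the factor is $2$ exactly at the odd indices $i\in\{3,\dots,m\}$, giving $2^{\lceil m/2\rceil-1}$ admissible lists. Summing the two cases yields $\mpf_{m,2}(1)=2^{\lfloor m/2\rfloor}+2^{\lceil m/2\rceil-1}$, which I would finally collapse by parity: for odd $m$ this is $2^{(m-1)/2}+2^{(m-1)/2}=2^{(m+1)/2}$, and for even $m$ it is $2^{m/2}+2^{m/2-1}=3\cdot 2^{m/2-1}$, matching the claim. (As a sanity check these formulas already reproduce $\mpf_{1,2}(1)=2$ and, via Corollary~\ref{cor:m2}, $\mpf_{2,2}(1)=3$.)

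I expect the only genuine obstacle to be the rigorous justification of the alternation claim, i.e. verifying that the two transition rules force $p_i\ne p_{i-1}$ at \emph{every} step and that no failure can arise except the single forbidden transition $p_{i-1}=2,\ a_i=2$; once this is pinned down the remainder is routine index/parity bookkeeping. An equivalent route, should a more combinatorial phrasing be preferred, is to invoke the lace characterization of Corollary~\ref{1mpf_n2_descrip}: over the alphabet $[2]$ the only non-singleton laces containing a $2$ are $(2,2)$ and $(1,1,2)$, so admissible strings are exactly those whose greedy lace decomposition uses only $(1)$, $(1,1)$, and $(2)$, and tracking the end-state of this decomposition gives the recurrence $\mpf_{m,2}(1)=2\,\mpf_{m-2,2}(1)$, which solves to the same closed form; I would present the direct process argument as the main proof since it is self-contained and makes the alternation transparent.
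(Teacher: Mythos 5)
Your proof is correct, and it takes a genuinely different route from the paper's. The paper proves the formula by first establishing the recurrence $\mpf_{m,2}(1)=2\,\mpf_{m-2,2}(1)$: it partitions $\MPF_{m-2,2}(1)$ into four disjoint classes according to the lace structure of the final two entries and checks that each class admits exactly two valid two-entry extensions, then solves the recurrence from the initial values $\mpf_{1,2}(1)=2$ and $\mpf_{2,2}(1)=3$. You instead analyze the parking process as a two-state machine on the occupied spot $p_{i-1}\in\{1,2\}$: the transition $p_{i-1}=1$ forces $p_i=2$ with two admissible preferences, while $p_{i-1}=2$ forces $p_i=1$ with one admissible preference (and one forbidden one), so the spot sequence alternates and the count factors as a product over positions, giving $2^{\lfloor m/2\rfloor}+2^{\lceil m/2\rceil-1}$ directly. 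Your alternation claim is fully justified by the transition analysis itself (each successful step lands in the opposite state), so there is no real gap; the case bookkeeping checks out against the table values ($4,6,8,12,16$ for $m=3,\dots,7$) and your formula in fact also reproduces $m=1,2$. What each approach buys: the paper's argument stays inside the lace-decomposition framework that organizes all of Section 3 and is the natural specialization of Corollary~\ref{1mpf_n2_descrip}, whereas yours is more elementary and self-contained, yields the closed form without passing through a recurrence, and makes the structural reason for the answer (forced alternation of occupied spots) transparent --- though it exploits the fact that for $n=2$ the street's state is a single bit, so it does not obviously extend to larger $n$. Your remark that the lace route recovers the same recurrence is accurate and connects the two proofs.
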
 

\begin{proof}
    We claim that $\mpf_{m,2}(1) = 2 \cdot \mpf_{m-2,2}(1)$. This recurrence solves the closed formula above given that $\mpf_{1,2}(1) = 2$ and $\mpf_{2,2}(1) = 3$. 
    Recall the description of these parking functions from Corollary~\ref{1mpf_n2_descrip}. 
    The first $m-2$ entries of any $1$-metered $(m,2)$-parking function constitute a $1$-metered $(m-2,2)$-parking function. 
    We will count all of the possible ways of appending two additional entries, $a_{m-1}$ and $a_{m}$, to these sequences such that we obtain a $1$-metered $(m,2)$-parking function. 
    We can never append $a_{m-1}=a_{m}={2}$ as a $1$-metered $(m,2)$-parking function can never have two  consecutive 2's. 
    Hence, we only need to consider appending the entries $a_{{m-1}}=a_{m}=1$, or $a_{{m-1}}=1$ with $a_{m}=2$, or $a_{{m-1}}=2$ with $a_{m}=1$.

    Let $S_{\gamma}$ denote the subset of $1$-metered $(m-2,2)$-parking functions where the last two cars are in a collection of laces of the form $\gamma$. For this proof, we will use $L_1$ to denote a lace that ends in a $1$ and other laces will be written explicitly.

    We can write $\MPF_{m-2,2}(1) = S_{(1,1)} \cup S_{L_1(1)} \cup S_{(2)(1)} \cup S_{(1)(2)}$, where the sets are disjoint. For a set of sequences $X$, let $\mathcal{A}^{i,j}(X)$ denote the set of sequences obtained by appending each $x \in X$ with an $i$ then a $j$.
    Adding only $1$'s to an $\alpha \in \MPF_{m-2,2}(1)$ will not affect the lace decomposition of any 2's, and so $\mathcal{A}^{1,1}(S_{(1,1)}) \subseteq \MPF_{m,2}(1)$, $\mathcal{A}^{1,1}(S_{L_1(1)}) \subseteq \MPF_{m,2}(1)$, $\mathcal{A}^{1,1}(S_{(1)(2)}) \subseteq\MPF_{m,2}(1)$, and $\mathcal{A}^{1,1}(S_{(2)(1)}) \subseteq \MPF_{m,2}(1)$.   

    If a sequence $\alpha \in S_{(1,1)}$ ends in a lace $(1,1)$ then appending a 2 next will never create a $1$-metered $(m,2)$-parking function, regardless of the next number. 
    If we append a $1$ and then a $2$, the lace decomposition among the last four cars will be $(1,1)(1)(2)$, so $\mathcal{A}^{1,2}(S_{(1,1)}) \subseteq \MPF_{m,2}(1)$. On the other hand, if a sequence $\alpha \in S_{L_1(1)}$ ends in laces $L_1(1)$, appending a $1$ and then a $2$ will never create a $1$-metered $(m,2)$-parking function because the final lace would be $(1,1,2)$.  If we append $2$ and then $1$, the lace decomposition of the final four entries will be $L_1(1)(2)(1)$, so $\mathcal{A}^{2,1}(S_{L_1(1)}) \subseteq \MPF_{m,2}(1)$.

    If a sequence $\alpha \in S_{(1)(2)}$ ends in laces $(1)(2)$ then we will never create a $1$-metered $(m,2)$-parking function by appending another $2$, regardless of what the following entry is. 
    We can, however, append a $1$ followed by a $2$ so that the lace decomposition of the new final four entries will be $(1)(2)(1)(2)$. Thus, $\mathcal{A}^{1,2}(S_{(1)(2)}) \subseteq \MPF_{m,2}(1)$. 
    If $\alpha \in S_{(2)(1)}$  ends in laces $(2)(1)$ then we cannot create a $1$-metered $(m,2)$ parking function by appending $1$ and then $2$ because the final lace decomposition would end with $(2)(1,1,2)$.  Instead, we can append a $2$ followed by a $1$ to get a lace decomposition that ends with $(2)(1)(2)(1)$. Thus, $\mathcal{A}^{2,1}(S_{(2)(1)}) \subseteq \MPF_{m,2}(1)$.

    We have considered all possibilities and so every entry in $\MPF_{m,2}(1)$ is in one of the aforementioned appended sets, all of which are disjoint. Given that $|\mathcal{A}^{i,j}(X)| = |X|$, we have 
    \[ \mpf_{m,2}(1) = 2|S_{(1,1)}| + 2|S_{L_1(1)}| + 2|S_{(1)(2)}| + 2|S_{(2)(1)}| = 2 \cdot \mpf_{m-2,2}(1). \qedhere\]
\end{proof}

Recall that by Corollary~\ref{cor:m2}, the number of $1$-metered $(2,n)$-parking functions is given by $n^2 - 1$. We conclude this section by providing a similar simple enumeration of $1$-metered $(3,n)$-parking functions, which aligns with OEIS entry \cite[\seqnum{A242135}]{OEIS}. 
\begin{proposition}\label{Prop:t1row3} 
    For $n\geq 2$, we have $\mpf_{3,n}(1) = n^3 - 2n.$
\end{proposition}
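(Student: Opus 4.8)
The plan is to derive this directly from the recursion already established in Theorem~\ref{thm:recursion t=1}, rather than counting laces by hand. That recursion, $\mpf_{m+1,n}(1) = n\cdot\mpf_{m,n}(1) - \mpf_{m-1,n}(1)$, is valid for $m \leq n$, so specializing to $m=2$ is legitimate precisely when $n \geq 2$, which matches the hypothesis of the proposition. Thus I would write
\[
\mpf_{3,n}(1) = n\cdot\mpf_{2,n}(1) - \mpf_{1,n}(1).
\]
Now substitute the two known values: by Corollary~\ref{cor:m2} we have $\mpf_{2,n}(1) = n^2 - 1$, and by the base case of Theorem~\ref{thm:recursion t=1} we have $\mpf_{1,n}(1) = n$. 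This gives $\mpf_{3,n}(1) = n(n^2-1) - n = n^3 - n - n = n^3 - 2n$, as claimed.

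Since the computation is immediate, there is really no substantive obstacle here; the only point requiring care is confirming that the recursion's standing hypothesis $m \leq n$ is met, and indeed $m = 2 \leq n$ is exactly the range $n \geq 2$ over which the proposition is stated. For confidence I would also verify the formula against Table~\ref{table_1mpf}: the $m=3$ row reads $21, 56, 115, 204, 329$ for $n = 3,4,5,6,7$, and $n^3-2n$ evaluates to $21, 56, 115, 204, 329$ respectively, so the closed form agrees with the tabulated data.

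As an independent sanity check (not needed for the proof, but reassuring), one can instead count directly via Theorem~\ref{thm:t1lace_decomp}: among all $n^3$ sequences in $[n]^m$ with $m=3$, a sequence fails to be a $1$-metered parking function exactly when some car preferring $n$ is displaced. For three cars this happens in two disjoint families — either car $2$ is blocked, forcing $a_1 = a_2 = n$ (giving $n$ bad sequences as $a_3$ ranges over $[n]$), or car $3$ is blocked, which requires $a_3 = n$ together with the first two entries causing car $2$ to occupy spot $n$ (namely $a_2 = n$ with $a_1 \neq n$, or $a_1 = a_2 = n-1$, for another $n$ bad sequences). These $2n$ forbidden sequences are disjoint when $n \geq 2$, recovering $\mpf_{3,n}(1) = n^3 - 2n$ and confirming the recursion-based result.
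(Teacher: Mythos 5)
Your proof is correct, but it takes a genuinely different route from the paper's. The paper proves Proposition~\ref{Prop:t1row3} by direct enumeration of the failing preference lists: among the $n^3$ sequences in $[n]^3$, it identifies the $n-1$ sequences of the form $(n,n,j)$ with $j<n$, the $n-1$ sequences of the form $(j,n,n)$ with $j<n$, and the two additional sequences $(n-1,n-1,n)$ and $(n,n,n)$, for a total of $2n$ non-parking-functions, invoking Theorem~\ref{thm:t1lace_decomp} to certify that everything else parks. Your primary argument instead specializes the recursion of Theorem~\ref{thm:recursion t=1} at $m=2$ and substitutes the known values $\mpf_{2,n}(1)=n^2-1$ (Corollary~\ref{cor:m2}) and $\mpf_{1,n}(1)=n$; this is shorter, reuses machinery already established, and correctly checks the standing hypothesis $m\leq n$. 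What the paper's approach buys is self-containedness and an explicit description of exactly which lists fail, which is in the spirit of the surrounding characterization results; what yours buys is brevity and a demonstration that the proposition is a formal consequence of the recursion. Your supplementary direct count is essentially the paper's argument reorganized by which car fails (and is also correct, including the disjointness of the two families of $n$ bad sequences each), so in total you have both the paper's proof and an independent one.
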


\begin{proof} 
    There are $n^3$ possible preference lists of length $3$ with entries in $[n]$; we will count the number that are not in $\MPF_{3,n}(1)$.  
    If $1\leq j < n$, then there are $n-1$ sequences of the form $(n,n,j)$ and there are $n-1$ of the form $(j,n,n)$, none of which can park, as either the second or third card would not find a spot on the street. 
    Additionally, the sequences $(n-1,n-1,n)$ and $(n,n,n)$ cannot park. By Theorem~\ref{thm:t1lace_decomp}, all of the other sequences considered will be able to park. 
    Therefore, $\mpf_{3,n}(1)=n^3 - 2(n-1)+2=n^3-2n$.
\end{proof}

\section{$(m-2)$-Metered Parking Functions}\label{sec:t=m-2}

The $(m-2)$-metered parking functions are an interesting special case because exactly one car will leave during the parking process. 
In a sense, these preference lists are as close to classical parking functions as possible without actually being classical parking functions. 
\begin{example}   
    Consider the preference sequence $\alpha = (3, 4, 1,1, 4, 2)$ where $m=6$ and $n=7$. Under the classical parking scheme, $\alpha$ has outcome $\mathcal{O}_7(\alpha) = (3,4, 1,2,5,6)$. In the $(m-2)$-metered setting, all but the last car will have exactly the same outcome, but now the last car will be able to park in the spot left open by the first car, which will leave after the second to last car arrives. Hence, $\out{4}{7}(\alpha) = (3,4,1,2,5,3)$.
\end{example}

To characterize and enumerate $(m-2)$-metered parking functions, we generalize the definition of a parking function shuffle as given by Diaconis and Hicks \cite[Page 129]{DiaHic2017}\footnote{We note that the definition of a parking shuffle presented in \cite[Page 129]{DiaHic2017} incorrectly states $\beta+(k-1)^{n-k}$ where it should instead say $\beta+(k)^{n-k}$.}
to the setting of $(m,n)$-parking functions. 
To begin, we recall again that $\alpha\in[n]^m$ is an $(m,n)$-parking function if and only if for all $k\in [n]$, 
\begin{align}
    |\{i \in[m] : a_i \leq k \}| \geq k + m-n.\label{mn pfs inequality}
\end{align} 
Additionally, recall that $\alpha$ is said to be a \textit{shuffle} of two sequences (or words) $\beta$ and $\gamma$ if $\alpha$ is formed by interspersing the entries of $\beta$ and $\gamma$, where the original orders of $\beta$ and of $\gamma$ are unchanged. For example, the shuffles of \emph{my} and \emph{car} are \emph{mycar, mcyar, mcayr, mcary, cmyar, cmayr, cmary, camyr, camry, carmy}.

\begin{definition}
    A sequence $(\pi_2,\ldots,\pi_m)$
    is an $(m,n)$-\emph{parking function shuffle} if it is a shuffle
    of the two words $\alpha$ and $\beta + (k)^{n-k} = (b_1 + k, \ldots, b_{n-k} + k)$ where $\alpha \in \PF_{m-n+k-1, k-1}$ and $\beta \in \PF_{n-k, n-k}$. We let $Sh_m(k-1, n-k)$ denote the set of all shuffles of $\alpha \in \PF_{m-n+k-1, k-1}$ and $\beta \in \PF_{n-k, n-k}$.
\end{definition}

Note that the value of $k$ is uniquely defined as the highest numbered parking spot available after we park an $(m,n)$-parking shuffle. 
Also, note that $(m,n)$-parking function shuffles are sequences of length $m-1$. The reason for considering sequences of the form $(\pi_2, \ldots, \pi_m)$ will be clear from Definition~\ref{def:A set}, which we give after the following example.

\begin{example}\label{ex:shuff72172}
    Consider the sequence $(3,7,2,1,7,2)$ parking on $8$ spots, which has outcome $\mathcal{O}(3,7,2,1,7,2)=(3,7,2,1,8,4)$ where the highest unoccupied spot is spot $6$. We see that $(3,7,2,1,7,2)$ is a $(7,8)$-parking function shuffle where $k=6$ because it is a shuffle of $(3,2,1,2) \in \PF_{4,5}$ and $(7,7) = (1,1) + (6,6)$ where $(1,1) \in \PF_{2,2}$.
\end{example}
The next definition plays a key role in our subsequent results.
\begin{definition}\label{def:A set} 
    For a sequence $(\pi_2, \ldots, \pi_m) \in [n]^{m-1}$, let $A^n_{(\pi_2, \ldots, \pi_{m})} = \{j : (j, \pi_2, \ldots, \pi_m) \in \PF_{m,n} \}.$
\end{definition} 
Notice that if $k \in A^n_{(\pi_2, \ldots, \pi_m)}$ then $k_1 \in A^n_{(\pi_2, \ldots, \pi_m)}$ for all $1 \leq k_1 \leq k$.
Thus, $A^n_{(\pi_2, \ldots, \pi_m)} = [k]$ for some value $k$. 

\begin{example}\label{ex:set72172} 
    Let $n=8$ and consider the parking function shuffle $(\pi_2, \ldots, \pi_m) = (3,7,2,1,7,2)$ from Example~\ref{ex:shuff72172}.  Observe that $$(1,3,7,2,1,7,2)\in \PF_{7,8}, \quad (2,3,7,2,1,7,2)\in \PF_{7,8},  \quad (3,3,7,2,1,7,2)\in \PF_{7,8},$$
    $$(4,3,7,2,1,7,2) \in \PF_{7,8}, \quad (5,3,7,2,1,7,2)\in \PF_{7,8}, \quad (6,3,7,2,1,7,2)\in \PF_{7,8},$$
    while
    $$(7,3,7,2,1,7,2)\not\in \PF_{7,8},\mbox{ and } \quad (8,3,7,2,1,7,2) \not\in \PF_{7,8}.$$
    Therefore, $A^8_{(3,7,2,1,7,2)} = [6]$.
\end{example}

The following statement generalizes \cite[Theorem 1]{DiaHic2017}, which is the special case where $m=n$.

\begin{theorem}\label{thm:mnshuffle} 
    Let $m \leq n$.
    Then $A^n_{(\pi_2, \ldots, \pi_m)} = [k]$ if and only if $(\pi_2, \ldots, \pi_m) \in Sh_m(k-1,n-k)$. 
\end{theorem}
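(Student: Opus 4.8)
The plan is to reformulate membership in $\PF_{m,n}$ entirely in terms of the counting function and then read off both the set $A^n$ and the shuffle structure from the same data. For the fixed sequence $\pi = (\pi_2,\dots,\pi_m)$ write $d_i = |\{\, 2 \le l \le m : \pi_l \le i\,\}|$ for $i \in [n]$. Prepending a first entry $j$ changes the counting function of the resulting length-$m$ sequence to $d_i + \mathbf{1}[j \le i]$, so by the characterization \eqref{mn pfs inequality}, $(j,\pi_2,\dots,\pi_m) \in \PF_{m,n}$ if and only if $d_i + \mathbf{1}[j\le i] \ge m-n+i$ for every $i \in [n]$. Call an index $i$ \emph{tight} when $d_i = m-n+i-1$. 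Then the constraint coming from $i$ is vacuous if $d_i \ge m-n+i$, forces $j \le i$ if $i$ is tight, and is unsatisfiable if $d_i \le m-n+i-2$. First I would record that $i=n$ is always tight, since $d_n = m-1 = m-n+n-1$, so tight indices exist; consequently, when $A^n \neq \varnothing$ (equivalently when $d_i \ge m-n+i-1$ for all $i$) we have $A^n = [k]$ with $k = \min\{\, i : i \text{ tight}\,\}$. This identifies the parameter $k$ of the statement with the smallest tight index.

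For the direction ($\Leftarrow$), suppose $\pi \in Sh_m(k-1,n-k)$, so $\pi$ is a shuffle of $\alpha \in \PF_{m-n+k-1,\,k-1}$ (entries in $[k-1]$) and $\beta + (k)^{n-k}$ with $\beta \in \PF_{n-k,n-k}$ (entries in $\{k+1,\dots,n\}$). Because the shuffle has no entry equal to $k$, I would compute $d_i$ in three ranges. For $i \le k-1$ only the $\alpha$-entries contribute, so $d_i = c_i(\alpha) \ge (m-n+k-1)-(k-1)+i = m-n+i$, making every such $i$ non-tight; at $i=k$ all $m-n+k-1$ small entries and no large entry are counted, giving $d_k = m-n+k-1$, so $k$ is tight; and for $i = k+s$ with $s \in [n-k]$, the large entries contribute $c_s(\beta) \ge s$, whence $d_{k+s} \ge (m-n+k-1)+s = m-n+(k+s)-1$. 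Thus $d_i \ge m-n+i-1$ for all $i$ and $k$ is the least tight index, so $A^n = [k]$ by the first paragraph.

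For the direction ($\Rightarrow$), assume $A^n = [k]$. Then $d_i \ge m-n+i-1$ for all $i$, every $i<k$ is non-tight ($d_i \ge m-n+i$), and $k$ is tight ($d_k = m-n+k-1$). Since $d_{k-1} \ge m-n+k-1 = d_k \ge d_{k-1}$, I get $d_{k-1}=d_k$, i.e.\ no entry of $\pi$ equals $k$; hence $\pi$ splits into $d_{k-1}=m-n+k-1$ small entries (values $\le k-1$) and $(m-1)-(m-n+k-1)=n-k$ large entries (values $\ge k+1$). Let $\alpha$ be the small subsequence and $\beta$ the large subsequence with $k$ subtracted from each term. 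For $i \in [k-1]$ only small entries are $\le i$, so $c_i(\alpha)=d_i \ge m-n+i = (m-n+k-1)-(k-1)+i$, which is exactly \eqref{mn pfs inequality} for $\alpha \in \PF_{m-n+k-1,k-1}$; and for $s \in [n-k]$, $c_s(\beta) = d_{k+s}-d_k \ge (m-n+(k+s)-1)-(m-n+k-1) = s$, which is \eqref{mn pfs inequality} for $\beta \in \PF_{n-k,n-k}$. Therefore $\pi$ is a shuffle of $\alpha$ and $\beta+(k)^{n-k}$, i.e.\ $\pi \in Sh_m(k-1,n-k)$.

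Finally I would check the parameter ranges are legitimate: $d_k = m-n+k-1 \ge 0$ forces $k \ge n-m+1$ so that $\PF_{m-n+k-1,k-1}$ has a nonnegative number of cars, while $m \le n$ gives $m-n+k-1 \le k-1$ so there are no more cars than spots, and $k \le n$ makes $\PF_{n-k,n-k}$ well defined (with $k=n$ giving the empty factor). The only genuinely delicate point is the indicator bookkeeping that pins $A^n$ to the single threshold $k=\min\{\text{tight } i\}$; once that is in place, both inclusions are parallel verifications of \eqref{mn pfs inequality} for the two factors, so I expect the tight-index identification, rather than either inclusion, to be the crux.
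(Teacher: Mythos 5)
Your proof is correct and follows essentially the same route as the paper's: both directions reduce to the counting characterization \eqref{mn pfs inequality}, with your ($\Leftarrow$) matching the paper's Claims 1--2 (prepending $k$ works, prepending $k+1$ fails) and your ($\Rightarrow$) matching Claims 3--4 (splitting $\pi$ into the entries $\le k-1$ and the entries $>k$ and verifying each factor is a parking function). Your ``tight index'' bookkeeping is a tidier uniform packaging of the paper's four claims --- in particular it makes explicit that no entry of $\pi$ equals $k$, via $d_{k-1}=d_k$, which the paper leaves implicit --- but it is not a genuinely different argument.
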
 

\begin{proof}
    We proceed by establishing the following claims, where Claims 1 and 2 will establish the forward direction, while Claims 3 and 4 will establish  the converse. \smallskip
 
    \noindent \textbf{Claim 1:} If $(\pi_2, \ldots, \pi_m) \in Sh_m(k-1, n-k)$, then $\pi=(k,\pi_2, \ldots, \pi_m)$ is an $(m,n)$-parking function. 
 
    To prove Claim 1 we first consider cars with preferences $j<k$. We have $|\{2\leq i\leq m : \pi_i \leq j\}| \geq j + m - n$ because this portion of the sequence $(\pi_2,\ldots,\pi_m)$ all comes from some $\alpha\in\PF_{m-n+k-1,k-1}$ that meets those conditions. The added first car is the only car that prefers $k$ and so $|\{ i : \pi_i \leq k \}| = |\{ i : \pi_i \leq k-1\}| + 1 \geq k + m -n$. When $j>k$, we are dealing with some of the cars from sequence $\beta + (k)^{n-k}$, all of the cars from $\alpha$, and the additional added $k$ at the start of $\pi$. 
    So, \[|\{i : \pi_i \leq j\}| = |\{i : \pi \leq k\}| + |\{i: b_i \leq j-k, \text{ with } b_i \in \beta\}| \geq k + m - n + j - k = j +m - n.\] Thus, $(k, \pi_2, \ldots, \pi_m)$ is an $(m,n)$-parking function. \smallskip
    
    \noindent \textbf{Claim 2:} If $(\pi_2, \ldots, \pi_m) \in Sh_m(k-1, n-k)$, then $\pi' = (k+1, \pi_2, \ldots, \pi_m)$ is \textbf{not} an $(m,n)$-parking function. 
 
    To prove Claim 2, we note that every entry in $\pi'$ that comes from $\beta$ is greater than $k$, as is the first entry of $\pi'$ since it is equal to $k+1$. 
    The only remaining entries are the $m-n+k-1$ entries from $\alpha$, all of which are at most $k-1$. Thus, \[|\{i : \pi_i \leq k\}| = m-1-n+k \leq k + (m-n),\]
    which by Equation~\eqref{mn pfs inequality} confirms that $\pi'$ is not an $(m,n)$-parking function. \smallskip
 
    \noindent ($\Rightarrow$) Claims 1 and 2 show that if $(\pi_2, \ldots, \pi_m) \in Sh_m(k-1,n-k)$, then $A^{n}_{(\pi_2, \ldots, \pi_m)} = [k]$. \smallskip

    \noindent\textbf{Claim 3:} If $\pi=(k, \pi_2, \ldots, \pi_m)$ is an $(m,n)$-parking function, then there exists a subsequence of $\pi$ that is an $(k-1+m-n,k-1)$-parking function. 
    This holds since, for example, one could take the subsequence formed by the $k-1+m-n$ cars that park in the spots $1$ through $k-1+m-n$. \smallskip
    
    \noindent \textbf{Claim 4:} If $\pi = (k, \pi_2, \ldots, \pi_m)$ is an $(m,n)$-parking function and $\pi' = (k+1, \pi_2, \ldots, \pi_m)$ is not an $(m,n)$-parking function, then there exists a subsequence of $\pi$ of the form $\beta + (k)^{n-k}$ where $\beta \in \PF_{n-k,n-k}$. 

    To establish this claim, we first note that for $\pi$ to be an $(m,n)$-parking function while $\pi'$ is not an $(m, n)$-parking function, $\pi$ must satisfy Equation~\eqref{mn pfs inequality} while $\pi'$ does not. This is only the case if $\pi$ has exactly $k+m-n$ cars with preferences less than or equal to $k$ (including the first car) which means that $\pi'$ has exactly $k+m-n-1$ cars with preferences less than or equal to $k$.  If $\pi$ has $k+m-n$ cars with preferences less than or equal to $k$ it must have $m-(k+m-n) = n-k$ cars that prefer spots numbered greater than $k$.
    
    Let $\gamma$ be the subsequence of $n-k$ cars in $\pi$ with preference greater than $k$ and let $\beta = \gamma - (k)^{n-k} = (b_1, \ldots, b_{n-k})$. 
    In $\pi$, for $n\geq j>k$, we have $|\{i: \pi_i \leq j\}| = | \{i : b_i + k \leq j\}| + |\{ i : \pi_i \leq k \}|$ because $\pi$ must satisfy Equation~\eqref{mn pfs inequality}. Here, we are splitting all the cars with preferences less than or equal to $j$ into those with preferences less than or equal to $k$ and those with preferences greater than $k$.
    Then by rearranging and using bounding values for $|\{i:\pi_i \leq j\}|$ and $|\{ i:\pi_i \leq k\}|$ given by Equation~\eqref{mn pfs inequality}, we obtain $$|\{i : b_i + k \leq j\}| = |\{i: \pi_i \leq j\}| - |\{ i : \pi_i \leq k \}| \geq j - m + n - (k - n + m) = j-k.$$ 
    Then, we have $|\{ i : b_{i} + k \leq j\}| = |\{ i : b_{i} \leq j-k\}| \geq j-k.$ Therefore, $\beta$ is an $(n-k, n-k)$-parking function by Equation~\eqref{mn pfs inequality}.
    \smallskip

    \smallskip
    \noindent ($\Leftarrow$)
    Claims 3 and 4 show that if $A^n_{(\pi_2, \ldots, \pi_m)} = [k]$, then $(\pi_2, \ldots, \pi_m) \in Sh_m(k-1, n-k)$, and so we have proved the theorem.
\end{proof}

The number of $(m,n)$-parking function shuffles is calculated as follows.

\begin{lemma}\label{lem:shuff_size} For $m \leq n$,
     \begin{align*}
         |Sh_m(k-1, n-k)| &=
         \begin{cases}
             \binom{m-1}{n-k}(n-m+1)k^{(m-n+k-2)}(n-k+1)^{n-k-1}&\mbox{if }k > n-m +1\\
              m^{(m-2)}&\mbox{if }k = n-m+1\\
             0&\mbox{if }k<n-m+1.\\
         \end{cases}
     \end{align*}
\end{lemma}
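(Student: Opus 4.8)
The plan is to count the shuffles directly as ordered data and then show that this data is recovered uniquely from the resulting sequence, so that the count factors as a product. By definition, an element of $Sh_m(k-1,n-k)$ is a shuffle of some $\alpha \in \PF_{m-n+k-1,k-1}$ with the word $\beta + (k)^{n-k}$, where $\beta \in \PF_{n-k,n-k}$. Such a shuffle is specified by three independent choices: the parking function $\alpha$, the parking function $\beta$, and the interleaving pattern recording which of the $m-1$ positions hold the (in-order) entries of $\alpha$ versus those of $\beta + (k)^{n-k}$. First I would record that the lengths add correctly, $(m-n+k-1)+(n-k)=m-1$, so there are exactly $\binom{m-1}{n-k}$ interleavings.

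The key step is to observe that this encoding is a bijection onto $Sh_m(k-1,n-k)$, i.e. distinct triples produce distinct sequences and every sequence arises exactly once. This follows from a disjointness of value ranges: the entries of $\alpha$ lie in $[k-1]$, while the entries of $\beta + (k)^{n-k}$ lie in $\{k+1,\ldots,n\}$ (since $\beta\in\PF_{n-k,n-k}$ has entries in $[n-k]$). In particular no entry equals $k$, so from any sequence in $Sh_m(k-1,n-k)$ one reads off the $\alpha$-positions as exactly those with value $<k$ and the $(\beta+k)$-positions as those with value $>k$; this recovers the interleaving, recovers $\alpha$ as the subsequence of small entries, and recovers $\beta$ by subtracting $k$ from the large entries. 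Hence $|Sh_m(k-1,n-k)| = \binom{m-1}{n-k}\cdot|\PF_{m-n+k-1,k-1}|\cdot|\PF_{n-k,n-k}|$.

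It then remains to substitute the cardinality formulas $|\PF_{p,q}|=(q-p+1)(q+1)^{p-1}$ and $|\PF_p|=(p+1)^{p-1}$ recalled in the introduction, and to split into three regimes according to whether $\alpha$ is permitted to be nonempty. When $k>n-m+1$ we have $m-n+k-1\geq 1$, so $|\PF_{m-n+k-1,k-1}| = (n-m+1)k^{m-n+k-2}$ and $|\PF_{n-k,n-k}| = (n-k+1)^{n-k-1}$, giving the first case verbatim. When $k=n-m+1$ the word $\alpha$ has length $0$, so there is a unique empty $\alpha$ and a single interleaving $\binom{m-1}{m-1}=1$; the shuffle is just $\beta+(k)^{n-k}$ with $\beta\in\PF_{m-1,m-1}$, yielding $m^{m-2}$. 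When $k<n-m+1$, the length $m-n+k-1$ is negative, no such $\alpha$ exists, and the set is empty.

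The main obstacle is conceptual rather than computational: carefully justifying the bijection of the second paragraph, namely that the disjoint value ranges of $\alpha$ and $\beta+(k)^{n-k}$ force every triple to yield a distinct sequence and, conversely, that each sequence in $Sh_m(k-1,n-k)$ comes from exactly one triple. The boundary cases also require care with the convention that $\PF_{0,q}$ consists of the single empty sequence; this is precisely why the statement isolates $k=n-m+1$ as its own case, rather than reading it off the generic formula, where the exponent $m-n+k-2=-1$ would otherwise appear.
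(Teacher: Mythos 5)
Your proof is correct and follows essentially the same route as the paper's: count the triples $(\alpha,\beta,\text{interleaving})$ and multiply $\binom{m-1}{n-k}\cdot|\PF_{m-n+k-1,k-1}|\cdot|\PF_{n-k,n-k}|$, with the same treatment of the boundary cases $k=n-m+1$ and $k<n-m+1$. The only difference is that you explicitly justify, via the disjoint value ranges $[k-1]$ and $\{k+1,\ldots,n\}$, that distinct triples yield distinct shuffles — a point the paper takes for granted — which is a welcome addition rather than a departure.
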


\begin{proof}
     Let $k > n-m+1$.  
     Each element in $Sh_m(k-1, n-k)$ is a shuffle of a word $\alpha \in \PF_{m-n+k-1, k-1}$ and $\beta + (k)^{n-k}$ where $\beta \in \PF_{n-k,n-k}$. 
     We know that $$|\PF_{m-n+k-1, k-1}| = (n-m+k+1)k^{(m-n+k-2)} \text{\quad and \quad} \PF_{n-k, n-k} = (n-k+1)^{(n-k-1)}.$$  
     There are $\binom{m-1}{n-k}$ shuffles of two words of length $m-n+k-1$ and $n-k$. 
     Therefore, we obtain the desired count by taking the product $\binom{m-1}{n-k}|\PF_{m-n+k-1, k-1}|\cdot|\PF_{n-k, n-k}|$. 
     If $k = n-m+1$ then $Sh_m(k-1, n-k)$ is shuffles of $\alpha \in \PF_{0,n-m}$ and $\beta+(k)^{n-k}$ where $\beta \in \PF_{n-k,n-k}$. 
     So, we only need to count $\beta \in \PF_{n-k,n-k}$, noting that in this case $n-k = m-1$, which is $|\PF_{m-1,m-1}| = m^{(m-2)}$. 
     It is impossible for $k$ to be lower than $n-m+1$ because there would be more than $m-1$ cars required for $\beta$.
\end{proof}
We are now able to give a count for the number of $(m,n)$-parking functions with a specified first entry.

\begin{corollary}\label{cor:enum_start}
    For $m\leq n$, the number of $\alpha=(a_1,a_2,\ldots,a_m) \in \PF_{m,n}$ with $a_1 = j > n-m+1$ is $$\sum_{i=j}^n \binom{m-1}{n-i}(n-m-1)i^{i+m-n-2}(n-i+1)^{n-i-1},$$ and the number of $\alpha =(a_1,a_2,\ldots,a_m)\in \PF_{m,n}$ with $a_1 = j \leq n-m+1$ is $$ m^{(m-2)} + \sum_{i=n-m+2}^n \binom{m-1}{n-i}(n-m-1)i^{i+m-n-2}(n-i+1)^{n-i-1}.$$
 \end{corollary}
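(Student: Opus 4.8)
The plan is to reduce the count to a sum of parking-function-shuffle cardinalities and then substitute the formula of Lemma~\ref{lem:shuff_size}. First I would observe that counting $\alpha=(a_1,\dots,a_m)\in\PF_{m,n}$ with $a_1=j$ is the same as counting the tails $(\pi_2,\dots,\pi_m)\in[n]^{m-1}$ for which $j\in A^n_{(\pi_2,\dots,\pi_m)}$, where $A^n$ is as in Definition~\ref{def:A set}: indeed $\alpha\mapsto(\pi_2,\dots,\pi_m)$ is a bijection between $\{\alpha\in\PF_{m,n}:a_1=j\}$ and $\{(\pi_2,\dots,\pi_m): j\in A^n_{(\pi_2,\dots,\pi_m)}\}$. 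Using the downward-closure observation recorded immediately after Definition~\ref{def:A set}, namely that $A^n_{(\pi_2,\dots,\pi_m)}=[k]$ for a unique value $k$, the condition $j\in A^n_{(\pi_2,\dots,\pi_m)}$ is equivalent to $k\geq j$.

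Next, I would partition the tails according to the value of $k$. By Theorem~\ref{thm:mnshuffle}, the tails with $A^n_{(\pi_2,\dots,\pi_m)}=[k]$ are precisely the elements of $Sh_m(k-1,n-k)$, so these classes are disjoint and their cardinality is $\lvert Sh_m(k-1,n-k)\rvert$. Since $A^n\subseteq[n]$ forces $k\le n$, the desired count is
\[
\#\{\alpha\in\PF_{m,n}:a_1=j\}=\sum_{k=j}^{n}\lvert Sh_m(k-1,n-k)\rvert .
\]
The tails with $A^n=\emptyset$ (i.e.\ $k=0$) never have first coordinate $j\geq 1$, so they are correctly excluded from this sum.

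Finally, I would substitute Lemma~\ref{lem:shuff_size} and split into the two ranges of $j$ appearing in the statement. When $j>n-m+1$, every index $k\in\{j,\dots,n\}$ satisfies $k>n-m+1$, so only the generic case of the lemma is used, and after reindexing $k\mapsto i$ the sum becomes exactly the first displayed formula. When $j\le n-m+1$, the terms with $k<n-m+1$ vanish, the single term $k=n-m+1$ contributes the anomalous value $m^{(m-2)}$ from the middle case of the lemma, and the terms $k=n-m+2,\dots,n$ contribute the generic product; collecting these gives the second displayed formula.

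The computation is routine once the reduction is in place, and the one genuine input is Theorem~\ref{thm:mnshuffle}, which identifies ``having $A^n=[k]$'' with ``being a shuffle in $Sh_m(k-1,n-k)$'' and thereby converts the enumeration into the single sum above. The only real care is the boundary bookkeeping in the last step: correctly isolating the special index $k=n-m+1$ (whose shuffle count is $m^{(m-2)}$ rather than the generic product), discarding the vanishing terms $k<n-m+1$ in the case $j\le n-m+1$, and confirming that the $k=0$ empty-$A^n$ tails are irrelevant for $j\geq 1$.
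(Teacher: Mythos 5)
Your proposal is correct and follows essentially the same route as the paper: the paper likewise observes that $a_1=j$ forces $A^n_{(a_2,\ldots,a_m)}=[k]$ for some $k\geq j$, invokes Theorem~\ref{thm:mnshuffle} to rewrite the count as $\sum_{i=j}^n \lvert Sh_m(i-1,n-i)\rvert$, and then substitutes Lemma~\ref{lem:shuff_size}. Your extra care with the boundary index $k=n-m+1$ and the vanishing terms is exactly the bookkeeping the paper leaves implicit, so there is nothing to add.
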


\begin{proof} 
    If $a_1 = j$ then $A^n_{(a_2, \ldots, a_m)} = [k]$ for some $k \geq j$. So, the number of parking functions that start with $j$ is the number of $A_{\pi} = [k]$ for any $k \geq j$. By Theorem~\ref{thm:mnshuffle}, this is equivalent to $\sum_{i=j}^n |Sh_m(i-1, n-i) |.$ This translates to our result by Lemma~\ref{lem:shuff_size}.
\end{proof}  

\begin{example}
   The $(3,4)$-parking functions that start with $3$ are:
   \begin{align*}
       &(3,1,1), \quad (3,1,2), \quad (3,1,3), \quad (3,1,4), \quad (3,2,1), \quad (3,2,2), \\
       &(3,2,3), \quad (3,2,4), \quad (3,3,1), \quad (3,3,2), \quad (3,4,1), \quad (3,4,2).
   \end{align*}
\end{example}

These are the sequences of the form $(3,\pi_2, \pi_3)$, where $A^4_{(\pi_2, \pi_3)} = [k]$ for $k \geq 3$. Next, we characterize the $(m-2)$-metered $(m,n)$-parking functions in terms of the $(m,n)$-parking shuffle.

\begin{theorem}\label{thm:chartmtwo} 
    For $2<m\leq n+1$, let $\alpha =(a_1,a_2,\ldots,a_{m})\in[n]^{m}$ 
    such that $(a_2, \ldots, a_{m-1}) \in Sh_{m-1}(k-1, n-k)$ for some $k \in [n]$, and let $(a_{i_1}, \ldots, a_{i_{m-n+k-1}})$ be the subsequence of entries in $(a_2, \ldots, a_{m-1})$ that are less than $k$. When $m \leq n$, also consider $j \in [k-1]$ such that $(a_{i_1}, \ldots, a_{i_{m-n+k-2}}) \in Sh_{m-n+k-1}(j-1, k-j-1)$, and when m=n+1 set $j=0$.
    Then, $\alpha \in \MPF_{m,n}(m-2)$ if and only if either both $a_1 \leq j$ and $a_m \leq k$, or if  both $j < a_1 \leq k$ and $a_m\leq a_1$.
\end{theorem}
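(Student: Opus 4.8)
The plan is to analyze the $(m-2)$-metered dynamics explicitly. Since $t=m-2$, the first departure occurs only after car $m-1$ parks, at which point car $1$ leaves; consequently cars $1,\dots,m-1$ park exactly as in the classical scheme, and car $m$ then attempts to park after the spot occupied by car $1$ has been vacated. I would first record the necessary condition that $(a_1,\dots,a_{m-1})\in\PF_{m-1,n}$, since otherwise some car among $1,\dots,m-1$ is stranded. By Theorem~\ref{thm:mnshuffle} applied to $(a_2,\dots,a_{m-1})\in Sh_{m-1}(k-1,n-k)$, we have $A^n_{(a_2,\dots,a_{m-1})}=[k]$, so this condition holds precisely when $a_1\le k$. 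Both cases in the statement force $a_1\le k$ (in the first, $a_1\le j\le k-1$), so whenever $a_1>k$ neither case holds and $\alpha\notin\MPF_{m,n}(m-2)$, as required.

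Next I would determine the set of spots free when car $m$ enters. Writing $S$ for the set of spots occupied after the classical parking of $(a_1,\dots,a_{m-1})$, I use that the occupied set of a classical parking process is independent of the order in which the cars arrive; this lets me compute $S$ by parking the middle cars $(a_2,\dots,a_{m-1})$ first and inserting car $1$ last. The outer shuffle structure shows that after the middle cars park, spot $k$ is free, spots $k+1,\dots,n$ are full, and among $[k-1]$ the highest free spot is exactly $j$ — this is where the inner condition $A^{k-1}_{(a_{i_1},\dots,a_{i_{m-n+k-2}})}=[j]$ enters, again via Theorem~\ref{thm:mnshuffle}. Because car $1$ enters first it always occupies its preferred spot $a_1$, so the spots available to car $m$ are $([n]\setminus S)\cup\{a_1\}$, and car $m$ parks if and only if one of these is $\ge a_m$.

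I would then split on whether $a_1\le j$ or $j<a_1\le k$. If $a_1\le j$, the inner condition says that car $1$ together with the middle entries less than $k$ forms an element of $\PF_{m-n+k-1,k-1}$, so they all fit within $[k-1]$ and spot $k$ stays free; hence the highest spot available to car $m$ is $k$, and car $m$ parks exactly when $a_m\le k$, giving the first case. If instead $j<a_1\le k$, the same condition shows these cars do \emph{not} all fit in $[k-1]$, so exactly one of them is pushed into spot $k$; inserting car $1$ last (where it is forced to $k$) pins spot $k$ into $S$, while in the true order car $1$ sits at $a_1$ and every spot of $\{a_1+1,\dots,k-1\}$ together with $k$ remains occupied. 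Vacating $a_1$ therefore leaves $a_1$ as the highest free spot, so car $m$ parks exactly when $a_m\le a_1$, giving the second case. The degenerate case $m=n+1$ is recovered by the convention $j=0$: the small middle cars form a $\PF_{k-1,k-1}$ filling $[k-1]$ completely, so only the second case can occur.

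The main obstacle is the bookkeeping in the second case, where one must simultaneously invoke that $S$ is order-independent (to locate $S$ and in particular to force spot $k$ into $S$) and that car $1$ is pinned to $a_1$ because it enters first (so that the vacated spot is $a_1$, not $k$). Reconciling these two viewpoints — and verifying that no spot of $\{a_1+1,\dots,k-1\}$ is freed when car $1$ departs — is the delicate step. Once it is established that the top spot available to car $m$ is $k$ in the first case and $a_1$ in the second, the two conditions on $a_m$ follow immediately and the characterization is complete.
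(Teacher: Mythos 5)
Your proposal is correct and follows essentially the same route as the paper's proof: both reduce the problem to identifying $k$ and $j$ (via Theorem~\ref{thm:mnshuffle}) as the two highest spots left empty by cars $2,\dots,m-1$, and then split on $a_1\le j$ versus $j<a_1\le k$ to decide whether the top spot available to car $m$ is $k$ or the vacated spot $a_1$. The only cosmetic difference is that you locate the occupied set by invoking order-independence of the classical parking outcome set (inserting car $1$ last, where it is forced into spot $k$), whereas the paper tracks the sequential process directly and argues that one of the cars destined for $\{j+1,\dots,k-1\}$ gets displaced into spot $k$; both devices are sound.
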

Before we begin the formal proof of Theorem~\ref{thm:chartmtwo}, we outline the idea of the proof. Consider all of the given assumptions setting up the statement of Theorem~\ref{thm:chartmtwo}, with this set up, if we just park cars $2$ through $m-1$ in $\alpha$, then the highest numbered parking spot which is unoccupied will be spot $k$, and the second highest numbered parking spot which is unoccupied will be spot $j$. In the case that $ m=n+1$, then  we let $j=0$ because this spot does not exist.
When we park cars $1$ through $m$ instead, the following will happen:
If $a_1 \leq j$, then car 1 with preference $a_1$ will be able to park in a spot numbered at most $j$ and the cars numbered above $j$ will not be affected, meaning that car $m$ is guaranteed to park as long as it prefers a spot numbered less than or equal to $k$, as $k$ denotes the largest numbered empty spot on the street. If $j < a_1 \leq k$, then a car is guaranteed to park in spot $k$. 
So, the highest numbered open spot after car $1$ leaves the street will be the spot that car $1$ just vacated. 
Thus, in this case, we need $a_m \leq a_1$  in order for car $m$ to park.
We are now ready to prove Theorem~\ref{thm:chartmtwo}.

\begin{proof} 
    First we  show that every sequence $\alpha \in \MPF_{m,n}(m-2)$ has the form described in our claim. 
    Note  $\alpha = (a_1,a_2, \ldots, a_{m})$ is an $(m-2)-$metered $(m,n)$-parking function if and only if $(a_1,a_2, \ldots, a_{m-1})\in \PF_{m-1,n}$, which is true if and only if $a_1 \leq k$ for $A^n_{(a_2, \ldots, a_{m-1} )} = [k]$. 
    By Theorem~\ref{thm:mnshuffle}, this is true if and only if $a_1 \leq k$ and $(a_2, \ldots, a_{m-1}) \in Sh_{m-1}(k-1, n-k)$. If $m=n+1$ then, $(a_1,a_2, \ldots, {a_{m-1}})\in \PF_{n,n}$, so there will be no open spaces once the {first $m-1$} cars have parked. 
    If $m \leq n$, we consider the entries in $(a_2, \ldots, a_m)$ that are less than $k$, of which we know there are $m-n+k-2$ because they come from $Sh_{m-1}(k-1, n-k)$ which also tells us that $(a_{i_1}, \ldots, a_{i_{m-n+k-2}})\in \PF_{m-n+k-2, k-1}$.  
    Since $m \leq n$, we see $m-n+k-2<k-1$ and so there is some maximal $j$ that can be prepended so that $(j, a_{i_1}, \ldots, a_{i_{m-n+k-2}}) \in \PF_{m-n+k-1, k-1}$. 
    We apply Theorem~\ref{thm:mnshuffle} again to see that $(a_{i_1}, \ldots, a_{i_{m-n+k-2}})\in Sh_{m-n+k-1}(j-1, k-j-1)$. \\

    \noindent ($\Leftarrow$) Assume that we have a sequence $\alpha$ that meets the conditions described above.
    There are two cases to consider: (1) $a_1 \leq j$ and $a_m \leq k$, or (2) $j < a_1 \leq k$ and $a_m \leq a_1$. In each case we must establish that $\alpha \in 
    \MPF_{m,n}(m-2)$.

    Case (1): Note that this case can only occur when $m \leq n$. Let $a_1 \leq j$ and $a_m \leq k$. As stated, the cars with preferences $(a_{i_1}, \ldots, a_{i_{m-n+k-2}})$ are the only cars among those numbered $2$ through $m-1$ that prefer a spot numbered strictly less than $k$. 
    Notice that $(a_{i_1}, \ldots, a_{i_{m-n+k-2}}) \in Sh_{m-n+k-1}(j-1, k-j-1)$ implies that $(a_1, a_{i_1}, \ldots, a_{i_{m-n+k-2}})\in \PF_{m-n+k-1, k-1}$ because $a_1 \leq j$. So, none of these cars affect the parking positions of any of the cars that prefer spots above $k$. As a result, those cars will park as they would without car $1$ present and so spot $k$ will not be occupied. Therefore, car $m$ will park since at least one spot at, or above, its preference is available.     

    Case (2): Let $j<a_1\leq k$ and $a_m\leq a_1$, recalling that if $m = n+1$ we set $j=0$. 
    Since $(a_2, \ldots, a_{m-1})\in Sh_{m-1}(k-1, n-k)$, 
    we have $(a_1,a_2, \ldots, a_{m-1}) \in \PF_{m-1, n}$. Then, car $1$ will leave before car $m$ parks, so there will be an open spot for car $m$ regardless of the locations of the other cars. Since $a_m\leq a_1$, car $m$ will park in the spot vacated by car $1$, {or an open spot before it.} \\

    \noindent($\Rightarrow$) On the other hand, assume that $\alpha \in \MPF_{m,n}(m-2)$, meaning that  all the cars can park. 
    We know from the beginning of the proof that $a_1 \leq k$. Similarly, since no spot higher than spot $k$ is available when cars $1$ through $m-1$ park, we must have that $a_m \leq k$. We consider the following two cases: when $m=n+1$ and when $m\leq n$.

    When $m=n+1$, as shown above, every spot will be taken by a car after the first $m-1$ cars (those with preferences $(a_1, \ldots, a_{m-1})$) park. 
    Thus, car $m$ must have a preference satisfying $a_m \leq a_1$ so that car $m$ can take the spot that car $1$ vacates, which is spot $a_1$. 

    Now, for $m \leq n$, we only need show that if $j < a_1 \leq k$, then $a_m \leq a_1$. 
    Because $(a_{i_1}, \ldots, a_{i_{m-n+k-2}}) \in Sh_{m-n+k-1}(j-1,k-j-1)$, the subsequence of cars whose preferences range from $j+1$ to $k-1$ must park in the spots numbered $j+1$ through $k-1$, inclusive. 

    Note this is also true when we park the cars 2 through $m$ with preferences $(a_2,\ldots, a_m)$, as the subsequence of cars (whose preferences are in the range $j + 1$ to $k-1$) came from this original preference list.
    When car $1$ parks before these cars, it will take its preferred spot $a_1$ where $j<a_1\leq k$. 
    Thus, if $j < a_1< k$, then one of the cars that, in the absence of car $1$, would park in the spots numbered $j+1$ to $k-1$ will be forced to park in spot $k$ instead. Otherwise, if $a_1=k$, then the cars that previously parked in the spots numbered $j+1$ to $k-1$ will park in the exact same way as car 1 does not affect them. In both cases, spot $k$ will be occupied when cars $1$ through $m-1$ are parked.
    Recall, that by definition, $k$ is the largest empty spot, after cars $2$ through $m-1$ have parked. 
    As we showed now, whenever $j<a_1\leq k$, parking the cars $1$ through $m-1$, ensures that spots $k$ through $n$ are occupied. As we are in a $(m-2)$-metered parking function, when car $m$ enters the street, the only car that has left the parking lot is car 1, which parked in its preference $a_1$. Thus, in order to park, car $m$ must have a preference satisfying $a_m\leq a_1$.
\end{proof}
The following example illustrates the characterization given in Theorem~\ref{thm:chartmtwo}.
\begin{example} 
    Consider $(a_2, a_3,\ldots, a_{m-1}) = (3,6,4,7,1,7) \in Sh_{7}(5-1,8-5)$. Note that this is a shuffle of $(3,4,1)\in \PF_{3,4}$ and $(1,2,2)+(5)^{3}$, where $(1,2,2)\in\PF_{3,3}$.
    Also observe that $(3,4,1)\in Sh_{4}(2-1,4-2)$. 
    We now construct the possible $\alpha = (a_1, a_2, \ldots, a_{m-1}, a_m)$ such that $\alpha \in \MPF_{8,8}(m-2)$.
    By Theorem~\ref{thm:chartmtwo} with $k=5$ and $j=2$, the tuples created from $a_1\leq j$ and $a_m\leq k$ are:
    \begin{align*}
        &(1,3,6,4,7,1,7,1), \,(1,3,6,4,7,1,7,2), \,(1,3,6,4,7,1,7,3), \,(1,3,6,4,7,1,7,4),\,(1,3,6,4,7,1,7,5),\,\\
        &(2,3,6,4,7,1,7,1),\, (2,3,6,4,7,1,7,2),\, (2,3,6,4,7,1,7,3), \,(2,3,6,4,7,1,7,4),\,(2,3,6,4,7,1,7,5),\\
    \intertext{while the tuples created from $j<a_1\leq k$ and $a_m\leq a_1$ are:}
        &(3,3,6,4,7,1,7,1),\, (3,3,6,4,7,1,7,2), \,(3,3,6,4,7,1,7,3),\\
        &(4,3,6,4,7,1,7,1), \,(4,3,6,4,7,1,7,2), \,(4,3,6,4,7,1,7,3),\,(4,3,6,4,7,1,7,4),\\
        &(5,3,6,4,7,1,7,1), \,(5,3,6,4,7,1,7,2), \,(5,3,6,4,7,1,7,3),\,(5,3,6,4,7,1,7,4), \,(5,3,6,4,7,1,7,5).
    \end{align*}
\end{example}
We now provide an enumeration for the set of $(m-2)$-metered $(m,n)$-parking functions.

\begin{table}[!h]
    \centering 
    \begin{tabular}{|c||c|c|c|c|c|c|c|}
    \hline 
    \backslashbox{$m$ cars}{$n$ spots} & 
    \textbf{\quad 1 \quad} &\cellcolor{white}\textbf{\quad 2\quad } & \cellcolor{white}\textbf{\quad 3\quad} & \cellcolor{white}\textbf{\quad 4\quad} & \cellcolor{white}\textbf{\quad 5\quad} & \cellcolor{white}\textbf{\quad 6\quad } & \textbf{\quad 7 \quad}\\
    \hline 
    \hline
    \textbf{1} & \cellcolor{lightgray!35}0 & 0 & 0 & 0 & 0 & 0 & 0 \\ \hline
    \cellcolor{white}\textbf{2}& 1 & \cellcolor{lightgray!35}4 & 9 & 16 & 25 & 36 & 49 \\
    \hline 
    \cellcolor{white}\textbf{3}& 0 & 4 & \cellcolor{lightgray!35}21 & 56 & 115 & 204 & 329 \\
    \hline 
    \cellcolor{white}\textbf{4}& 0 & 0 & 27 & \cellcolor{lightgray!35}163 & 483 & 1095 &2131 \\
    \hline 
    \cellcolor{white}\textbf{5}& 0 & 0 & 0 & 257 & \cellcolor{lightgray!35}1686 & 5367 & 13076\\
    \hline 
    \cellcolor{white}\textbf{6}& 0 & 0 & 0 & 0 & 3156 & \cellcolor{lightgray!35}21858 & 73276 \\
    \hline 
    \cellcolor{white}\textbf{7}& 0 & 0 & 0 & 0 & 0 & 47442 & \cellcolor{lightgray!35} 341192 \\
    \hline
    \end{tabular}
    \caption{Number of $(m-2)$-metered $(m,n)$-parking functions.}
    \label{tab:m2}
\end{table}

\enumtmtwo

\begin{proof}
    By following Theorem~\ref{thm:chartmtwo}, we need to consider the relation between entries $a_1$, and $a_m$ respectively. 
    We first count the instances where $a_m \leq a_1 \leq k$ by 
    \begin{align}
        \sum_{k=n-m+2}^n \left(\binom{k}{2}+k\right)|Sh_{m-1}(k-1,n-k)|,\label{eq:messy1}
    \end{align} 
    where we start the sum at $n-m+2$, and apply 
    Lemma~\ref{lem:shuff_size} with $m\leq n+1$. 
    Next, we count the instances where $a_1<a_m\leq k$, which can only occur when 
    \begin{align} \label{possiblenewalphas}
        (a_{i_1}, \ldots, a_{i_{m-n+k-2}}) \in Sh_{m-n+k-1}(j-1, k-j-1)
    \end{align} 
    and $a_1 \leq j$, which is precisely the first set of conditions in Theorem~\ref{thm:chartmtwo}. This contributes
    \begin{align}
        &\sum_{k=n-m+2}^n \sum_{j=n-m+1}^{{k-1}}\left(jk -\binom{j}{2} - j\right)\binom{m-2}{n-k}|Sh_{m-n+k-1}(j-1,k-j-1)|(n-k+1)^{n-k-1},\label{eq:help}
    \end{align}
    to the count. 
    Notice that in~\eqref{eq:help}, the factor of $jk-\binom{j}{2}-j$ counts the number of ways to select $a_1$ and $a_m$ such that $a_1\leq j$, $a_m\leq k$, and $a_m>a_1$.
    Also in~\eqref{eq:help}, 
    the factor of $|Sh_{m-n+k-1}(j-1,k-j-1)|$ accounts for the possible options for $(a_{i_1}, \ldots, a_{i_{m-n+k-2}})$ as given in~\eqref{possiblenewalphas}, while the factor of 
    $(n-k+1)^{n-k-1}$ accounts for all the possible $\beta$ of length $n-k$ such that $\beta - (k)^{n-k} \in \PF_{n-k,n-k}$.
    We then need to account for the number of ways to shuffle two words of length $m-n+k-2$ and $n-k$, which can be done in 
    $\binom{m-2}{n-k}$ ways.
    Now we must account for all possible values of $j\in[k-1]$, noting that if $j<n-m+1$, by Lemma~\ref{lem:shuff_size}, the shuffle factor would be zero.

    Our goal is to simplify the sum of~\eqref{eq:messy1} and~\eqref{eq:help}, and expand via Lemma~\ref{lem:shuff_size}.
    To begin, we separate out the $k=n-m+2$ terms from each of the sums~\eqref{eq:messy1} and~\eqref{eq:help}. Note that in~\eqref{eq:help}, when $k=n-m+2$ the only possible accompanying value of $j$ is $j=n-m+1$. 
    Summing the $k=n-m+2$ terms together simplifies to
    \begin{align}
        & \binom{n-m+3}{2}(m-1)^{m-3} + \left((n-m+1)(n-m+2) - \binom{n-m+2}{2} \right)\binom{m-2}{m-2}(m-1)^{m-3}\nonumber\\
        & = \frac{(n-m+3)(n-m+2)}{2} (m-1)^{m-3} + \frac{(n-m+1)(n-m+2)}{2}(m-1)^{m-3}\nonumber\\
        & = (n-m+2)^2(m-1)^{m-3}.\label{eq:term k=n-m+2}
    \end{align}
    We now use the fact that $\binom{k}{2}+k = \binom{k+1}{2}$ and Lemma~\ref{lem:shuff_size} to get that 
    \begin{multline}\label{thissucked}
        \sum_{k=n-m+3}^n \left(\binom{k}{2}+k\right)|Sh_{m-1}(k-1,n-k)|=\\
        \sum_{k=n-m+3}^n\binom{k+1}{2}\binom{m-2}{n-k}(n-m+2)k^{m-n+k-3}(n-k+1)^{n-k-1}.
    \end{multline}
    Next we observe the following:
    \begin{align}
        &\sum_{k=n-m+3}^n \sum_{j=n-m+1}^{{k-1}}\left(jk -\binom{j}{2} - j\right)\binom{m-2}{n-k}|Sh_{m-n+k-1}(j-1,k-j-1)|(n-k+1)^{n-k-1}\nonumber\\
        &=\sum_{k=n-m+3}^n \sum_{j=n-m+1}^{{k-1}}\left(jk -\binom{j+1}{2}\right)\binom{m-2}{n-k}|Sh_{m-n+k-1}(j-1,k-j-1)|(n-k+1)^{n-k-1}\nonumber\\
        &=\sum_{k=n-m+3}^n \binom{m-2}{n-k}(n-k+1)^{n-k-1}\Bigg[ \label{eq:almosthome}\\
        &\qquad\qquad\left((n-m+1)k -\binom{n-m+2}{2}\right)|Sh_{m-n+k-1}(n-m,k-n+m-2)| \nonumber \\
        &\qquad\qquad\qquad\qquad\qquad+\sum_{j=n-m+2}^{k-1}\left(jk -\binom{j+1}{2}\right)|Sh_{m-n+k-1}(j-1,k-j-1)| \Bigg].\nonumber
    \end{align}
    \noindent Next we simplify the following shuffles using Lemma~\ref{lem:shuff_size}.
    For $j=n-m+1$, we have 
    \begin{align}
        |Sh_{m-n+k-1}(n-m,k-n+m-2)|&=(m-n+k-1)^{m-n+k-3}\label{dagger}\\
        \intertext{and for $j>n-m+1$ we have}
        |Sh_{m-n+k-1}(j-1,k-j-1)|&=\binom{m-n+k-2}{k-1-j}(n-m+1)j^{m-n+j-2}(k-j)^{k-j-2}.\label{daggerdagger}
    \end{align}
    Substituting equations~\eqref{dagger} and~\eqref{daggerdagger} into~\eqref{eq:almosthome},  adding the result with equations~\eqref{thissucked} and~\eqref{eq:term k=n-m+2}, and factoring common terms out of the sums yields the final formula:
    \begin{align*}
        \mpf_{m,n}&(m-2)=(n-m+2)^{{2}}(m-1)^{m-3} \\
        &\quad+\sum_{k=n-m+3}^n \binom{m-2}{n-k}(n-k+1)^{n-k-1}\Bigg[ \binom{k+1}{2}(n-m+2)k^{(m-n+k-3)} \\
        &\quad\quad+\left( k(n-m+1) - \binom{n-m+2}{2} \right)(k-n+m-1)^{k-n+m-3} \\
        &\quad\quad\quad+ \sum_{j=n-m+2}^{k-1} \left(jk - \binom{j+1}{2}\right)\binom{m-2-n+k}{k-1-j}(n-m+1)j^{(j+m-2-n)}(k-j)^{k-j-2} \Bigg].\qedhere
    \end{align*}
\end{proof}

Specializing $n = m-1$ in Theorem~\ref{thm:enumtmtwo} yields a nice combinatorial result.

\mtwocor

\begin{proof}
    It follows from Proposition~\ref{prop:unmetered} (or the proof of Theorem~\ref{thm:chartmtwo}) that if $\alpha \in \MPF_{m,m-1}(m-2)$ then $(a_1, \ldots, a_{m-1}) \in PF_{m-1,m-1}$. As a result, the only spot open when car $m$ tries to park will be the one that car $1$ has left. Thus, if $a_1 = k$ there are $k$ possible values for $a_m$, given a certain $(a_2, \ldots, a_{m-1})$.  
    Here we use Diaconis and Hicks \cite[Corollary 1]{DiaHic2017} original formula for the number of classical parking functions of length $m-1$ that begin with a specific $a_1$ value. 
\end{proof}

The sequence $(\mpf_{m,m-1}(m-2))_{m \geq 2}$ corresponds to OEIS entry \cite[\seqnum{A328694}]{OEIS}.

\begin{example}
   Consider the $(3,3)$-parking function $(2,1,3)$. The $2$-metered $(4,3)$-parking functions of the form $(2,1,3,a_4)$ are $(2,1,3,1)$ and $(2,1,3,2).$ More generally, the $2$-metered $(4,3)$-parking functions have the form $(b_1, b_2, b_3, b_4)$, where $(b_1, b_2, b_3) \in \PF_{3}$ and $b_4 \leq b_1$. 
\end{example}

\section{$(n-1)$-Metered Parking Functions}\label{sec:t=n-1}

In this section, we describe and enumerate $(n-1)$-metered $(m,n)$-parking functions, specifically those where $m \geq n+1$, as other cases are equivalent to unmetered $(m,n)$-parking functions. 
When $m\geq n+1$, unlike $t$-metered $(m,n)$-parking functions where $m \leq n$, the outcome will become a periodic sequence repeating the outcome of the first $n$ cars. 
After car $n$ parks, all of the spots will be full until the first car leaves, so car $n+1$ is forced to park in the spot that car $1$ occupied. Similarly, car $n+2$ must park in the spot that car $2$ occupied. It follows that the outcome of the first $n$ cars in $\alpha \in \MPF_{m,n}(n-1)$ is $(p_1, p_2, \ldots, p_n)$ then the outcome of $\alpha$ will, for some $i\in [n]$, be
$$(p_1, p_2, \ldots, p_n,\ \  p_1, p_2, \ldots, p_n,\ \ \ldots\ \ ,p_1, p_2, \ldots p_i ).$$ 

\begin{example}
    If $n = 4$, $t=3$, and $m = 11$, then $\out{3}{4}(2,4,2,1,1,3,2,1,2,3,3) = (2,4,3,1,2,4,3,1,2,4,3)$. Here we have two copies of $(2,4,3,1)$ in the outcome, followed by the first three entries of an additional copy.
\end{example}
\begin{theorem}\label{thm:charn1}
    Fix $k>0$. 
    Consider a sequence $\alpha = (a_1, a_2,\ldots, a_{n+k})\in[n]^{n+k}$ and let $\out{n-1}{n}(a_1, a_2,\ldots, a_n) = (\pi_1,\pi_2, \ldots, \pi_n)$ be the outcome of $(a_1, \ldots, a_n)$ under the $(n-1)$-metered parking scheme. Then, $\alpha$ is an $(n-1)$-metered $(n+k,n)$-parking function if and only if the non-decreasing rearrangement $a_1' \leq a_2' \leq \cdots \leq a_{n}'$ of $a_1$ through $a_{n}$ satisfies $a_i' \leq i$ for all $i \in [n]$ and $a_{n+i}\leq\pi_{i\mod{n}}$ for all $i \in [k]$.
\end{theorem}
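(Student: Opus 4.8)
The plan is to exploit the special structure of the meter length $t=n-1$: since car $j$ departs only after car $j+(n-1)$ parks, the very first departure (that of car $1$) happens precisely after car $n$ has parked. Consequently the first $n$ cars park exactly as they would under the classical scheme on $n$ spots, with no car ever leaving during this initial phase. First I would record this: cars $1,\dots,n$ all park if and only if $(a_1,\dots,a_n)\in\PF_{n}$, which by the characterization recalled in the introduction is equivalent to $a_i'\le i$ for all $i\in[n]$; moreover, when this holds, the $n$ cars fill all $n$ spots and their outcome is exactly $(\pi_1,\dots,\pi_n)=\out{n-1}{n}(a_1,\dots,a_n)$. This disposes of the condition on the non-decreasing rearrangement and sets up the bookkeeping for the remaining cars.

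The heart of the argument is an induction establishing a periodicity invariant for the later cars. I would prove, by induction on $i\in\{1,\dots,k\}$, the following statement: assuming cars $1,\dots,n+i-1$ have all parked, immediately before car $n+i$ enters the street the set of currently parked cars is exactly $\{i+1,i+2,\dots,n+i-1\}$ — these are $n-1$ cars — and they occupy every spot except $\pi_{i\bmod n}$, where the index $i\bmod n$ is taken in $\{1,\dots,n\}$. The base case $i=1$ is the observation above: after car $n$ parks all spots are full, then car $1$ leaves and vacates spot $\pi_1$. For the inductive step, the unique free spot being $\pi_{i\bmod n}$ forces car $n+i$ to park if and only if $a_{n+i}\le \pi_{i\bmod n}$, in which case it parks in spot $\pi_{i\bmod n}$; all spots are then full, and car $(n+i)-(n-1)=i+1$ departs. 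To identify the vacated spot I would use the periodicity $q_{n+j}=\pi_{j\bmod n}$ (where $q$ denotes the outcome of all of $\alpha$) accumulated along the induction together with $q_j=\pi_j$ for $j\le n$, so that car $i+1$ occupied spot $\pi_{(i+1)\bmod n}$. Since the $n-1$ consecutive residues $i+1,\dots,n+i-1$ modulo $n$ omit exactly the residue $i\bmod n$, the invariant passes to $i+1$.

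With the invariant in hand both directions follow immediately. If $\alpha\in\MPF_{n+k,n}(n-1)$ then all cars park, so in particular the first $n$ force $a_i'\le i$, and the invariant shows car $n+i$ could only have parked in its unique available spot $\pi_{i\bmod n}$, giving $a_{n+i}\le\pi_{i\bmod n}$ for every $i\in[k]$. Conversely, if both conditions hold, the induction shows that every car successfully parks. I expect the main obstacle to be organizational rather than conceptual: one must choose an induction hypothesis strong enough to simultaneously track which cars remain on the street, which single spot is free, and the periodic pattern of outcomes, and then keep the modular indexing (with the convention $i\bmod n\in\{1,\dots,n\}$) consistent across the step where a newly parked car triggers a departure.
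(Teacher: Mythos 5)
Your proposal is correct and follows essentially the same route as the paper: the prefix $(a_1,\dots,a_n)$ parks classically (no departures occur until car $n$ has parked), filling all $n$ spots, and thereafter car $n+i$ faces a unique open spot $\pi_{i\bmod n}$ vacated by car $i$, forcing $a_{n+i}\le\pi_{i\bmod n}$. Your explicit induction tracking the set of parked cars and the single free spot is just a more formal packaging of the periodicity argument the paper states directly.
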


\begin{proof}
    ($\Rightarrow$) Let $\alpha = (a_1, a_2, \ldots, a_{n+k})$ be an $(n-1)$-metered 
    $(n+k,n)$-parking function. 
    Note that the prefix $(a_1, a_2, \ldots, a_n)$ must be an $(n-1)$-metered $(n,n)$-parking function.  
    As detailed in Proposition~\ref{prop:unmetered}, $(a_1, \ldots, a_n)$ is in fact a classical parking function.
    Hence, it follows that $(a_1,a_2,\ldots,a_n)$ satisfies the inequality condition on the non-decreasing rearrangement. Namely, if $(a_1',a_2',\ldots,a_n')$ is the non-decreasing rearrangement of $(a_1,a_2,\ldots,a_n)$, then  $a_i'\leq i$ for all $i\in [n]$.
    After car $n$ parks,  car 1 will leave the street. 
    When car $n+1$ arrives, in order to successfully park on the street, it must park in the spot the car 1 vacated, which is spot $\pi_1$. Note this is the case, as that spot is the only available spot on the street. 
    Hence, car $n+1$ must be have a preference for a spot numbered less than or equal to $\pi_1$. 
    Next,  car 2 will leave, so when car $n+2$ attempts to park the only spot it may park in is spot $\pi_2$ (the spot vacated by car 2), and hence its potential preferences must be spots numbered up to and including spot $\pi_2$. 
    This pattern continues until it potentially repeats when car $n+1$ leaves spot $\pi_1$ open for car $2n+1$, at which point the pattern for the possible parking preferences begins again. This establishes that $a_{n+i}\leq \pi_{i\mod n}$ for all $i\in k$, as desired. \smallskip

    \noindent ($\Leftarrow$) Now, consider $\alpha\in[n]^{n+k}$ to be a sequence such that the non-decreasing rearrangement $a_1' \leq a_2' \leq \cdots \leq a_{n}'$ of $a_1$ through $a_{n}$ satisfies $a_i' \leq i$ for $1 \leq i \leq n$ and $a_{n+i} \leq \pi_{i \mod{n}}$ for each $1 \leq i \leq k$. 
    The first $n$ cars can park because they constitute a classical parking function. 
    The second condition assures that each successive car has a preference less than or equal to the spot that will be open when it arrives. 
    Thus $\alpha$ is an $(n-1)$-metered $(n+k,n)$-parking function.
\end{proof}

\begin{table}[!h]
    \centering
    \begin{tabular}{|c||c|c|c|c|c|c|c| }
    \hline 
    \backslashbox{$m$ cars}{$n$ spots} & \textbf{\quad 1 \quad} & \cellcolor{white}\textbf{\quad 2\quad} & \cellcolor{white}\textbf{\quad 3\quad } & \cellcolor{white}\textbf{\quad4 \quad} & \cellcolor{white}\textbf{\quad 5 \quad} & \cellcolor{white}\textbf{\quad 6 \quad} & \textbf{\quad 7 \quad} \\
    \hline 
    \hline
    \textbf{1} & \cellcolor{lightgray!35}1 & 2 & 3 & 4 & 5 & 6 & 7\\ \hline
    \cellcolor{white}\textbf{2}& 1 & \cellcolor{lightgray!35}3 & 8 & 15 & 24 & 35 & 48 \\
    \hline 
    \cellcolor{white}\textbf{3} & 1  & 4 & \cellcolor{lightgray!35}16 & 50 & 108 & 196 & 320 \\
    \hline 
    \cellcolor{white}\textbf{4} &1  & 6 & 27 & \cellcolor{lightgray!35}125 & 432 & 1029 & 2048 \\
    \hline 
    \cellcolor{white}\textbf{5} &1  & 8 & 48 & 257 & \cellcolor{lightgray!35}1296 & 4802  & 12288\\
    \hline 
    \cellcolor{white}\textbf{6}& 1 & 12 & 96 & 540 & 3156 & \cellcolor{lightgray!35}16807 & 65536\\
    \hline 
    \cellcolor{white}\textbf{7}& 1 & 16 & 162 & 1200 & 7734 & 47442 & \cellcolor{lightgray!35} 262144 \\
    \hline
    \end{tabular}
    \caption{The number of $(n-1)$-metered $(m,n)$-parking functions.}
    \label{tab:n1}
\end{table}
The enumeration of $(n-1)$-metered $(m,n)$-parking functions can be given in terms of the number of classical parking functions with a certain outcome. We state and prove this result next, and conclude with an illustrative example that utilizes the Corollary along with some special cases.

\nplusk

\begin{proof}
    Recall the result of Spiro
    \cite[Theorem 3]{Spiro2019SubsetPF} (with an equivalent result in \cite[Proposition 3.1]{countingKnaples}), which shows
    that the number of parking functions with outcome $\pi$ is given by $\prod_{i=1}^n L_i(\pi)$. 
    The result follows from this and the periodicity of the outcome map for the set of $(n-1)$-metered $(n+k,n)$-parking functions, as described in Theorem~\ref{thm:charn1}.
\end{proof}

\begin{example}
    Consider $n=3, m=5,$ and $t=2$. Hence, $k=2$.
    \vspace{4mm}
    \begin{center}
    \begin{tabular}{|l||c|c|c|c|c|c|}\hline & & & & & &
    \\[-7pt] 
        $\pi\in\mathfrak{S}_3$ &$123$ & $132$ & $213$ & $231$ & $312$ & $321$ \\[3pt]
        \hline\hline & & & & & & \\[-5pt]
        $\prod_{i=1}^n L_i(\pi)$ & $1\cdot2\cdot3=6$ &$1\cdot2\cdot1=2$ &$1\cdot1\cdot3=3$ &$1\cdot2\cdot1=2$
        &$1\cdot1\cdot2=2$ &$1\cdot1\cdot1 =1$\\[5pt]
        \hline & & & & & &\\[-5pt]
        $\prod_{j=1}^2\pi_{j\mod{n}}$ &2 &3 &2 &6 &3 &6\\[5pt]
        \hline
    \end{tabular}\\
    \end{center}
    \vspace{4mm}
    Taking the product along columns of row two and three we have that:
    \[\mpf_{5,3}(2)=6\cdot2 + 2\cdot3 + 3\cdot2 + 2\cdot6 + 2\cdot3 + 1\cdot6=48.\]
\end{example}

This is indeed the number of $2$-metered $(5,3)$-parking functions, see Table~\ref{table_2mpf}. The $n=m+1$ diagonal in Table~\ref{tab:n1} corresponds to OEIS entry \cite[\seqnum{A007334}]{OEIS} which counts the number of spanning trees in the graph $K_{n+1}/e$, which results from contracting an edge $e$ in the complete graph $K_{n+1}$ on n vertices for $n \geq 2$. By Proposition~\ref{prop:unmetered}, our sequence also corresponds with the sequence $(|\PF_{n-1,n}|)_{n \geq 2}$ so it is calculated by $2(n+1)^{n-2}$ which also counts the aforementioned number of spanning trees. Additionally, $\mpf_{n+1,n}(n-1)$ is the same special case as $\mpf_{m,m-1}(m-2)$ so by Corollary~\ref{cor:mtwocor}, we have that $\mpf_{n+1,n}(n-1)$ is equal to the sum of the first entries of parking functions of length $n$. 

\section{Future Work}\label{sec:future}

We now detail some possible directions for future research related to the results presented in the previous sections. 
In Section~\ref{sec:prelim}, we remarked that $t$-metered parking functions are not permutation invariant. One can pose the following problem.
\begin{problem}
    Fix a positive integer $t$. What must be true of a $t$-metered parking function $\alpha\in\MPF_{m,n}(t)$ such that  $\sigma(\alpha)$ is also a $t$-metered parking function for all permutations $\sigma$ of $[m]$.
\end{problem}
As we also remarked, $t$-metered parking functions are not nested in a natural way. Hence we ask:
\begin{question}
    When does $\MPF_{m,n}(t) \subseteq \MPF_{m,n}(t')$? Can one give a characterization of the set of $\alpha\in\MPF_{m,n}(t)$ such that $\alpha\in\MPF_{m,n}(t')$ for $t<t'$ (or $t'<t$)?
\end{question}
We also posed Conjecture~\ref{conj:future work}, which remains an open problem.
In addition, given the lack of set containment for $t$-metered parking functions, we ask the following.
\begin{question}
    Is there a different way to define $t$-metered parking functions so that the sets have the nested property one might expect?
\end{question}

In what follows, we let $\lucky(\alpha)=|\{i\in[m]:\text{car $i$ is a lucky car} \}|.$
We are able to give counting formulas for the number of $t$-metered $(m,n)$-parking functions with exactly $1$ and $m$ lucky cars.

\begin{proposition}\label{prop:tlucky}
    Let $m,n \geq 2$ and $t \leq m-1$. Then,
    \begin{align}
        |\{ \alpha \in \MPF_{m,n}(t): \lucky(\alpha) = 1\}|&=
        (t-1)!(n-m+1)  t^{m-t}\text{, and }\label{eq:oneluckycar} \\
     |\{ \alpha \in \MPF_{m,n}(t): \lucky(\alpha) = m\}|&=\frac{n!}{(n-t)!}(n-t)^{m-t}.\label{eq:mluckycars}
    \end{align}
\end{proposition}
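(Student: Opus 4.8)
The plan is to treat the two extreme luck values separately, in each case first pinning down the exact combinatorial structure forced by the luck condition and then counting directly. The common starting point is the observation that car $1$ always parks in its preferred spot, so $\lucky(\alpha)\geq 1$ for every $\alpha$; consequently $\lucky(\alpha)=m$ forces every car to be lucky, while $\lucky(\alpha)=1$ forces car $1$ to be the only lucky car, so that cars $2,\ldots,m$ are all unlucky. In both arguments I would use the fact, recorded at the start of Section~\ref{sec:prelim}, that once the first $t+1$ cars have parked there are always exactly $t$ cars on the street when a new car arrives (and for the warm-up car $i$ with $2\leq i\leq t+1$ there are exactly $i-1$ cars), together with Proposition~\ref{prop:disp} bounding each displacement by $t$.

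For \eqref{eq:mluckycars} I would argue that a car is lucky precisely when its preferred spot is empty on arrival. When all cars are lucky, each parked car sits in its preferred spot, so when car $j$ arrives the cars on the street occupy exactly the preferences of the previous $\min(j-1,t)$ cars. Hence all cars being lucky is equivalent to requiring that $a_j$ differ from each of $a_{\max(1,j-t)},\ldots,a_{j-1}$ for every $j$, which is in turn equivalent to saying that \emph{every window of $t+1$ consecutive entries of $\alpha$ is distinct}. Conversely any such sequence makes every car lucky and so lies automatically in $\MPF_{m,n}(t)$. Counting these sequences from left to right gives $n(n-1)\cdots(n-t)$ choices for $a_1,\ldots,a_{t+1}$ and $n-t$ choices for each of the remaining $m-t-1$ entries, so the total is $n(n-1)\cdots(n-t)\,(n-t)^{m-t-1}=\tfrac{n!}{(n-t)!}(n-t)^{m-t}$, as claimed.

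For \eqref{eq:oneluckycar} I would first prove, by induction on $i$, that if cars $2,\ldots,m$ are all unlucky then the occupied spots always form a single contiguous block and car $i$ parks in spot $a_1+i-1$, so the outcome is the ``staircase'' $(a_1,a_1+1,\ldots,a_1+m-1)$. The inductive step uses that a car is unlucky exactly when its preference lands inside the current occupied block (an empty preferred spot would make it lucky), and that a preference inside a contiguous block sends the car to the first empty spot just past the block. This simultaneously forces the staircase and shows that for each $i$ the number of preferences making car $i$ unlucky equals the current block size: namely $i-1$ for $2\leq i\leq t+1$ and $t$ for $t+2\leq i\leq m$, once departures have begun. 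Finally car $m$ must fit, i.e.\ $a_1+m-1\leq n$, so $a_1$ ranges over $\{1,\ldots,n-m+1\}$. Multiplying the independent choices gives
\[
(n-m+1)\Big(\textstyle\prod_{i=2}^{t+1}(i-1)\Big)\,t^{\,m-t-1}=(n-m+1)\,t!\,t^{\,m-t-1}=(t-1)!\,(n-m+1)\,t^{\,m-t}.
\]

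The main obstacle is the structural claim in the one-lucky case: rigorously establishing the contiguous-block invariant and verifying that ``all later cars unlucky'' genuinely \emph{forces} the staircase rather than merely being consistent with it. Care is needed at the transition from the warm-up phase to the steady state (at $i=t+1\to t+2$, where car $1$ departs and the block size stabilizes at $t$), as well as in checking the boundary conditions, namely that every displaced preference stays at least $1$ and that the final car fits exactly when $a_1\leq n-m+1$. I would also state the regime of validity explicitly: for $m>n$ no staircase fits, and both the factor $n-m+1$ and the true count are read as $0$; the all-lucky formula likewise vanishes when $n\leq t$, since $(n-t)^{m-t}=0$ there.
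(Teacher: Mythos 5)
Your proof is correct and takes essentially the same route as the paper's: in both cases one observes that when car $i$ arrives there are exactly $\min(i-1,t)$ cars on the street, that all-lucky forces $a_i$ to avoid those occupied spots (giving $n,n-1,\dots,n-t+1$ and then $n-t$ choices), and that one-lucky forces each later car to prefer a spot in the contiguous occupied block so that the outcome is the staircase $(a_1,a_1+1,\dots,a_1+m-1)$, yielding the factors $(i-1)$, then $t$, and $n-m+1$ for $a_1$. Your write-up merely makes explicit the contiguous-block invariant that the paper compresses into ``each car will park in the lowest spot above the $t$ cars before it,'' and groups the boundary index $i=t+1$ on the other side of the split, which changes nothing.
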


\begin{proof} 
    Let $m,n \geq 2$ and $t\leq m-1$. We establish each formula independently, but the overall key to these proofs is knowing that the first car is always lucky, any subsequent car is lucky if it prefers an unoccupied spot, and unlucky if it prefers an occupied spot.
    \begin{enumerate}[leftmargin=.25in]
        \item The first car is always lucky, so every other car must be unlucky. This means that each car after the first must prefer one of the occupied spots when it parks. The second car must prefer the spot where the first car is parked so it only has one preference option. This pattern continues so that when $2 \leq i \leq t$, car $i$ has $i-1$ preference options. When $t+1 \leq i \leq m,$ there will always be $t$ cars parked when car $i$ parks. In this case, car $i$ has $t$ preference options. By construction, each car will park in the lowest spot above the $t$ cars before it. Thus, the first car must park in spot $n-m+1$ or lower, so the remaining $m-1$ cars have enough spaces to park. The total number of preference options for all cars together is $(n-m+1) \cdot 1 \cdot 2 \cdots (t-1) t^{m-t}=(t-1)!(n-m+1)  t^{m-t}$. 
    
        \item For every car to be lucky, each car can prefer any spot except for the spots occupied by the $t$ cars before them.  For $1 \leq i \leq t$, car $i$ has $n-i+1$ preference options. 
        For $t+1 \leq i \leq m$, car $i$ has $n-t$ preference options. Thus, the total number of preference options is $n(n-1) \cdots (n-t+1)(n-t)^{m-t}$, which simplifies to equation~\eqref{eq:mluckycars}. \qedhere
    \end{enumerate}
\end{proof}

Unfortunately, the techniques used to prove Proposition~\ref{prop:tlucky} do not hold in more generality. However, computationally we observe that for $m=3,4,5$, the set of $1$-metered $(m,n)$-parking functions with $k$ lucky cars is a polynomial in $n$ of degree $k$. Based on those observations we pose the following question. 

\begin{question}
    Fix the following positive integer parameters: $t\geq 1$, $m\geq 2$, and $1\leq k\leq m-1$. 
    Let $a_k(n)=|\{\alpha\in\MPF_{m,n}(t): \lucky(\alpha)=k\}|$. 
    Prove or provide a counter example to the claim that the sequence $\big(a_{k}(n)\big)_{n\geq m-1}$ is given by a polynomial in $n$ of degree $k$. 
\end{question}

Our characterization and enumeration of $1$-metered $(m,n)$-parking functions was restricted to $m\leq n$. Hence, we pose the following problem.

\begin{problem}\label{open:m>n}
    Give a recursive or closed formula for the number of $1$-metered $(m,n)$-parking functions in the case where $m>n$.
\end{problem}

Our main results considered $t$-metered parking functions in the special cases where $t=1,n-1,m-2$. 
It remains an open problem to give new formulas for other $t$ values. 
We state this formally below.
\begin{problem}
    Give a characterization and/or enumeration of the $t$-metered $(m,n)$-parking functions for values of $t$ distinct from $1$, $n-1$, and $m-2$.
\end{problem}

There are numerous generalizations of parking functions including generalizations where cars have varying lengths \cite{assortments,Count_Assortments}, where cars back up whenever they find their preferred parking spot occupied \cite{Christensen2019AGO}, and where cars have a set of preferences among the spots on the street \cite{aguilarfraga2023interval,Spiro2019SubsetPF}. It would be of interest to study any of these parking function generalizations under the $t$-metered parking scheme. 
As this section illustrates, there are numerous avenues for further study. 

\appendix

\section{Data Tables}\label{ap:data}
\begin{table}[H]
    \centering
    \begin{tabular}{|c||c|c|c|c|c|c|c|}
    \hline
    \backslashbox{$t$ meter}{$n$ cars} & \textbf{\quad 1\quad} & \textbf{\quad 2 \quad}  &   \textbf{\quad 3 \quad}  & \textbf{ \quad 4 \quad} &  \textbf{\quad 5 \quad} &  \textbf{\quad 6 \quad} &  \textbf{\quad 7 \quad}   \\
    \hline \hline
    \textbf{1}& 1 & 3 &  21 & 209 & 2640 & 40391 & 726103\\
    \hline
    \textbf{2}&  1 & 3 & 16 & 163 & 2142 & 33961 & 626569\\
    \hline
    \textbf{3}&  1 & 3 & 16 & 125 & 1686 & 27629 & 525594\\
    \hline
    \textbf{4}&  1 & 3 & 16 & 125 & 1296 & 21858 & 430062\\
    \hline
    \textbf{5}&  1 & 3 & 16 & 125 & 1296 & 16807 & 341192\\
    \hline
    \textbf{6}&  1 & 3 & 16 & 125 & 1296 & 16807 & 262144\\
    \hline
    \textbf{7}&  1 & 3 & 16 & 125 & 1296 & 16807 & 262144\\
    \hline
    \end{tabular}
    \caption{Number of $t$-metered $(n,n)$-parking functions.}
    \label{tab:tnn}
\end{table}
\begin{table}[H]
    \centering
    \begin{tabular}{|c||c|c|c|c|c|c|c|}
    \hline 
    \backslashbox{$m$ cars}{$n$ spots}  & \textbf{\quad 1\quad} & \textbf{\quad 2 \quad}  &   \textbf{\quad 3 \quad}  & \textbf{ \quad 4 \quad} &  \textbf{\quad 5 \quad} &  \textbf{\quad 6 \quad} &  \textbf{\quad 7 \quad}    \\
    \hline \hline
    \textbf{1}& 1  & 2 & 3 & 4 & 5 & 6 & 7 \\
    \hline 
    \textbf{2}& 0  &  3 & 8 & 15 & 24 & 35 & 48 \\
    \hline 
    \textbf{3} & 0  & 0 & 16 & 50 & 108 & 196 & 320 \\
    \hline 
    \textbf{4}& 0  & 0 & 27 & 163 & 483 & 1095 & 2131 \\
    \hline 
    \textbf{5}& 0  & 0 & 48 & 514 & 2142 & 6098 & 14170 \\
    \hline 
    \textbf{6} & 0 & 0 & 96 & 1665 & 9496 & 33961 & 94228 \\
    \hline
    \textbf{7}& 0 & 0 &162& 5411 & 42196&  189100& 626569  \\
    \hline
    \end{tabular}
    \caption{Number of $2$-metered $(m,n)$-parking functions.}
    \label{table_2mpf}
\end{table}
\begin{table}[H]
    \centering
    \begin{tabular}{|c||c|c|c|c|c|c|c|}
    \hline
    \backslashbox{$m$ cars}{$n$ spots} & \textbf{\quad 1 \quad } & \textbf{\quad 2\quad} & \textbf{\quad 3 \quad } & \textbf{\quad 4\quad} &  \textbf{\quad 5 \quad} & \textbf{\quad 6 \quad} & \textbf{\quad 7\quad} \\
    \hline \hline
    \textbf{1}& 1 & 2 &  3& 4&  5&  6& 7\\
    \hline 
    \textbf{2}&  0 &  3& 8 &  15& 24 & 35&  48\\
    \hline
    \textbf{3}& 0& 0&  16& 50 & 108 & 196 & 320 \\
    \hline
    \textbf{4}&0 & 0& 0 &  125&  432 & 1029 & 2048 \\
    \hline
    \textbf{5}&0 & 0& 0 & 257 & 1686  &5367  & 13076 \\
    \hline
    \textbf{6}&  0& 0& 0&  540&  6253&  27629& 83069 \\
    \hline
    \textbf{7} & 0&  0& 0 &1200  & 23228 & 140599 &  525594 \\
    \hline
    \end{tabular}
    \caption{Number of $3$-metered $(m,n)$-parking functions.}
    \label{tab:3mpf}
\end{table}

\begin{table}[H]
    \centering
    \begin{tabular}{|c||c|c|c|c|c|c|c|}
    \hline
    \backslashbox{$m$ cars}{$n$ spots} & \textbf{\quad 1 \quad } & \textbf{\quad 2\quad} & \textbf{\quad 3 \quad } & \textbf{\quad 4\quad} &  \textbf{\quad 5 \quad} & \textbf{\quad 6 \quad} & \textbf{\quad 7\quad} \\
    \hline \hline
    \textbf{1}& 1 & 2 & 3 & 4& 5 & 6 & 7 \\
    \hline 
    \textbf{2}& 0  & 3 & 8 &  15&  24& 35& 48 \\ 
    \hline
    \textbf{3}& 0 & 0 & 16 & 50 & 108 & 196 & 320 \\
    \hline
    \textbf{4}& 0& 0& 0 & 125 &  432& 1029&2048 \\
    \hline
    \textbf{5}& 0& 0&  0&  0&  1296 &  4802& 12288 \\
    \hline
    \textbf{6}& 0 & 0 & 0 & 0 & 3156 & 21858 & 73276\\
    \hline
    \textbf{7}& 0 & 0 & 0 & 0 & 7734 & 93526 & 430062 \\
    \hline 
    \end{tabular}
    \caption{Number of $4$-metered $(m,n)$-parking functions.}
    \label{tab:4mpf}
\end{table}

\subsection{Connections to Known OEIS Sequences}

The following rows/columns/diagonals of Tables~\ref{table_1mpf}-\ref{tab:4mpf} have appeared in the OEIS:

\begin{itemize}
    \item The first rows of Tables~\ref{table_1mpf},~\ref{table_2mpf},~\ref{tab:3mpf}, and~\ref{tab:4mpf} are OEIS entry  \cite[\seqnum{A000027}]{OEIS}.  
    \item The second rows of Tables~\ref{table_1mpf},~\ref{table_2mpf},~\ref{tab:3mpf}, and~\ref{tab:4mpf} are OEIS entry \cite[\seqnum{A005563}]{OEIS}.
    \item The second column of Table~\ref{table_1mpf} and the second column of Table~\ref{tab:n1} are OEIS entry \cite[\seqnum{A029744}]{OEIS}. 
    \item The third row of Table~\ref{table_1mpf} is OEIS entry \cite[\seqnum{A242135}]{OEIS}.
    \item The  $n=m+1$ diagonal of Table~\ref{table_1mpf} is  OEIS entry \cite[\seqnum{A097691}]{OEIS}.
    \item The $n=m+2$ diagonal of Table~\ref{table_1mpf} is OEIS entry \cite[\seqnum{A342167}]{OEIS}. 
    \item The $n=m+3$ diagonal of Table~\ref{table_1mpf} is OEIS entry \cite[\seqnum{A342168}]{OEIS}.
    \item The  $n=m-1$ diagonals of Table~\ref{tab:m2} and Table~\ref{tab:n1} are OEIS entry \cite[\seqnum{A328694}]{OEIS}.
    \item The $n=m$ main diagonal of Table~\ref{table_1mpf} and the first row of Table~\ref{tab:tnn} are OEIS entry \cite[\seqnum{A097690}]{OEIS}.
    \item The $n=m+1$ diagonal in Table~\ref{tab:n1} is OEIS entry \cite[\seqnum{A007334}]{OEIS} 
\end{itemize}
\printbibliography
\end{document}